\newcommand{\D}{{\mathcal D}}
\newcommand{\R}[0]{\mathbb R}
\newcommand{\Ds}[0]{\mathcal D}
\newtheorem{Th}{Theorem}[section]
\newtheorem{Lemma}{Lemma}[section]
\newtheorem{Prop}[Lemma]{Proposition}
\newtheorem{Coro}[Th]{Corollary}
\newtheorem{Def}{Definition}[section]
\newtheorem{Rem}{Remark}[section]
\begin{document}

\title{On a Lagrangian formulation of the incompressible Euler equation}
\author{
Hasan Inci\\
EPFL SB MATHAA PDE\\
MA C1 637 (B\^atiment MA)\\
Station 8\\
CH-1015 Lausanne, Switzerland\\
{\it email:} hasan.inci@epfl.ch
}

\maketitle

\begin{abstract}
In this paper we show that the incompressible Euler equation on the Sobolev space $H^s(\R^n)$, $s > n/2+1$, can be expressed in Lagrangian coordinates as a geodesic equation on an infinite dimensional manifold. Moreover the Christoffel map describing the geodesic equation is real analytic. The dynamics in Lagrangian coordinates is described on the group of volume preserving diffeomorphisms, which is an analytic submanifold of the whole diffeomorphism group. Furthermore it is shown that a Sobolev class vector field integrates to a curve on the diffeomorphism group.
\end{abstract}

\noindent
{\bf Keywords: Euler equation, Diffeomorphism group\\
2010 Mathematics Subject Classification: 35Q35}

\section{Introduction}\label{section_introduction}

The initial value problem for the incompressible Euler equation in $\R^n$, $n \geq 2$, reads as:
\begin{eqnarray}
\nonumber
\partial_t u + (u \cdot \nabla) u &=& -\nabla p \\
\label{E}
\operatorname{div} u &=& 0 \\
\nonumber
u(0)&=& u_0
\end{eqnarray}
where $u(t,x)=\big(u_1(t,x),\ldots,u_n(t,x)\big)$ is the velocity of the fluid at time $t \in \R$ and position $x \in \R^n$, $u \cdot \nabla = \sum_{k=1}^n u_k \partial_k$ acts componentwise on $u$, $\nabla p$ is the gradient of the pressure $p(t,x)$, $\operatorname{div} u=\sum_{k=1}^n \partial_k u_k$ is the divergence of $u$ and $u_0$ is the value of $u$ at time $t=0$ (with assumption $\operatorname{div} u_0 = 0$). The system \eqref{E} (going back to Euler \cite{euler}) describes a fluid motion without friction. The first equation in \eqref{E} reflects the conservation of momentum. The second equation in \eqref{E} says that the fluid motion is incompressible, i.e. that the volume of any fluid portion remains constant during the flow.\\
The unknowns in \eqref{E} are $u$ and $p$. But as we will see later one can express $\nabla p$ in terms of $u$. Thus the evolution of system \eqref{E} is completely described by $u$. Therefore we will speak in the sequel of the solution $u$ instead of the solution $(u,p)$.\\
 Consider now a fluid motion determined by $u$. If one fixes a fluid particle which at time $t=0$ is located at $x \in \R^n$ and whose position at time $t \geq 0$ we denote by $\varphi(t,x) \in \R^n$, we get the following relation between $u$ and $\varphi$
\[
 \partial_t \varphi(t,x) = u\big(t,\varphi(t,x)),
\]
i.e. $\varphi$ is the flow-map of the vectorfield $u$. The second equation in \eqref{E} translates to the well-known relation $\det(d\varphi) \equiv 1$, where $d\varphi$ is the Jacobian of $\varphi$ -- see Majda, Bertozzi \cite{majda}. In this way we get a description of system \eqref{E} in terms of $\varphi$. The description of \eqref{E} in the $\varphi$-variable is called the Lagrangian description of \eqref{E}, whereas the description in the $u$-variable is called the Eulerian description of \eqref{E}. One advantage of the Lagrangian description of \eqref{E} is that it leads to an ODE formulation of \eqref{E}. This was already used in Lichtenstein \cite{lichtenstein} and Gunter \cite{gunter} to get local well-posedness of \eqref{E}.\\ \\
To state the result of this paper we have to introduce some notation. For $s \in \R_{\geq 0}$ we denote by $H^s(\R^n)$ the Hilbert space of real valued functions on $\R^n$ of Sobolev class $s$ and by $H^s(\R^n;\R^n)$ the vector fields on $\R^n$ of Sobolev class $s$ -- see Adams \cite{adams} or Inci, Topalov, Kappeler \cite{composition} for details on Sobolev spaces. We will often need the fact that for $n \geq 1$, $s > n/2$ and $0 \leq s' \leq s$ multiplication
\begin{equation}\label{multiplication}
 H^s(\R^n) \times H^{s'}(\R^n) \to H^{s'}(\R^n),\quad (f,g) \mapsto f \cdot g
\end{equation}
is a continuous bilinear map.\\
 The notion of solution for \eqref{E} we are interested in are solutions which lie in $C^0\big([0,T];H^s(\R^n;\R^n)\big)$ for some $T > 0$ and $s > n/2+1$. This is the space of continuous curves on $[0,T]$ with values in $H^s(\R^n;\R^n)$. To be precise we say that $u,\nabla p \in C^0\big([0,T];H^s(\R^n;\R^n)\big)$ is a solution to \eqref{E} if
\begin{equation}\label{RE}
 u(t) = u_0 + \int_0^t -(u(\tau) \cdot \nabla) u(\tau) - \nabla p(\tau) \;d\tau \quad \forall 0 \leq t \leq T
\end{equation}
and $\operatorname{div} u(t)=0$ for all $0 \leq t \leq T$ holds. As $s-1>n/2$ we know by the Banach algebra property of $H^{s-1}(\R^n)$ that the integrand in \eqref{RE} lies in $C^0\big([0,T];H^{s-1}(\R^n;\R^n)\big)$. Due to the Sobolev imbedding and the fact $s > n/2+1$ the solutions considered here are $C^1$ (in the $x$-variable slightly better than $C^1$) and are thus solutions for which the derivatives appearing in \eqref{E} are classical derivatives. \\
The discussion above shows that in this paper the state-space of \eqref{E} in the Eulerian description is $H^s(\R^n;\R^n)$, $s > n/2+1$. The state-space of \eqref{E} in the Lagrangian description is given by 
\[
 \Ds^s(\R^n) = \big\{ \varphi:\R^n \to \R^n \;\big|\; \varphi - \operatorname{id} \in H^s(\R^n;\R^n) \mbox{ and } \det d_x\varphi > 0, \;\forall x \in \R^n\big\}
\]
where $\operatorname{id}:\R^n \to \R^n$ is the identity map. Due to the Sobolev imbedding and the condition $s > n/2+1$ the space of maps $\Ds^s(\R^n)$ consists of $C^1$-diffeomorphisms -- see Palais \cite{palais} -- and can be identified via $\Ds^s(\R^n) - \operatorname{id} \subseteq H^s(\R^n;\R^n)$ with an open subset of $H^s(\R^n;\R^n)$. Thus $\Ds^s(\R^n)$ has naturally a real analytic differential structure (for real analyticity we refer to Whittlesey \cite{analyticity}) with the natural identification of the tangent space 
\[
 T\Ds^s(\R^n) \simeq \Ds^s(\R^n) \times H^s(\R^n;\R^n).
\] 
Moreover it is known that $\Ds^s(\R^n)$ is a topological group under composition and that for $0 \leq s' \leq s$ the composition map
\begin{equation}\label{composition}
 H^{s'}(\R^n) \times \Ds^s(\R^n) \to H^{s'}(\R^n), \quad (f,\varphi) \mapsto f \circ \varphi
\end{equation}
is continuous -- see Cantor \cite{cantor_thesis} and Inci, Topalov, Kappeler \cite{composition}. That $\Ds^s(\R^n)$ is the right choice as configuration space for \eqref{E} in Lagrangian coordinates is justified by the fact that every $u \in C^0\big([0,T];H^s(\R^n;\R^n)\big)$, $s > n/2+1$, integrates uniquely to a $\varphi \in C^1\big([0,T];\Ds^s(\R^n)\big)$ fullfilling
\[
 \partial_t \varphi(t) = u(t) \circ \varphi(t) \quad \mbox{for all } 0 \leq t \leq T 
\]
-- see Fischer, Marsden \cite{fischer} or Inci \cite{thesis} for an alternative proof. \\ \\
For the rest of this section we assume $n \geq 2$, $s > n/2+1$ and for $X,Y$ real Banach spaces we use the notation $L^2(X;Y)$ for the real Banach space of continuous bilinear maps from $X \times X$ to $Y$. With this we can state the main result of this paper:

\begin{Th}\label{th_main}
Let $n \geq 2$ and $s > n/2+1$. Then there is a real analytic map
\[
 \Gamma:\Ds^s(\R^n) \to L^2\big(H^s(\R^n;\R^n);H^s(\R^n;\R^n)\big), \quad \varphi \mapsto [(v,w) \mapsto \Gamma_\varphi(v,w)]
\]
called the Christoffel map for which the geodesic equation
\begin{equation}\label{geodesic_eq}
\partial_t^2 \varphi = \Gamma_\varphi(\partial_t \varphi,\partial_t \varphi);\quad \varphi(0)=\operatorname{id},\partial_t \varphi(0)=u_0 \in H^s(\R^n;\R^n)
\end{equation}
is a description of \eqref{E} in Lagrangian coordinates. More precisely, any $\varphi$ solving \eqref{geodesic_eq} on $[0,T]$, $T > 0$, with $\operatorname{div} u_0 = 0$ generates a solution to \eqref{E} by the formula $u:=\partial_t \varphi \circ \varphi^{-1}$ and on the other hand any $u$ solving \eqref{E} on $[0,T]$ integrates to a $\varphi$ solving \eqref{geodesic_eq} on $[0,T]$. 
\end{Th}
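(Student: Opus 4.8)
The plan is to turn \eqref{E} into a second order ODE on $\Ds^s(\R^n)$ by differentiating the flow relation twice, to read off a candidate Christoffel map $\Gamma$, and then to establish separately (i) that $\Gamma$ is real analytic with values in $L^2\big(H^s;H^s\big)$ and (ii) that the associated geodesic equation is equivalent to \eqref{E}. Writing $v=\partial_t\varphi$ and using $\partial_t\varphi=u\circ\varphi$, a direct differentiation gives
\[
\partial_t^2\varphi=\big(\partial_t u+(u\cdot\nabla)u\big)\circ\varphi=-(\nabla p)\circ\varphi .
\]
Taking the divergence of the momentum equation and using $\operatorname{div}u=0$ yields the classical relation $\Delta p=-\operatorname{tr}\big((du)^2\big)$, so that $\nabla p=-\nabla\Delta^{-1}\operatorname{tr}\big((du)^2\big)$. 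The chain rule gives $(du)\circ\varphi=dv\,(d\varphi)^{-1}$, whence $\operatorname{tr}\big((du)^2\big)\circ\varphi=Q_\varphi(v,v)$ with
\[
Q_\varphi(v,w):=\tfrac12\operatorname{tr}\Big(\big(dv\,(d\varphi)^{-1}\big)\big(dw\,(d\varphi)^{-1}\big)+\big(dw\,(d\varphi)^{-1}\big)\big(dv\,(d\varphi)^{-1}\big)\Big).
\]
Writing $R_\varphi f=f\circ\varphi$, this suggests setting
\[
\Gamma_\varphi(v,w):=R_\varphi\,\nabla\Delta^{-1}\,R_{\varphi^{-1}}\,Q_\varphi(v,w),
\]
so that the identity above reads exactly $\partial_t^2\varphi=\Gamma_\varphi(\partial_t\varphi,\partial_t\varphi)$.

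The analyticity of $\Gamma$ I would obtain by factoring it. Since $d\varphi=I+d(\varphi-\operatorname{id})$ is an affine, hence analytic, function of $\varphi$ with values in $H^{s-1}(\R^n;\R^{n\times n})$, and since $\det d\varphi$ is bounded away from $0$, matrix inversion together with the multiplication property \eqref{multiplication} shows that $\varphi\mapsto(d\varphi)^{-1}$ is real analytic into $H^{s-1}$; as $Q_\varphi$ is a fixed polynomial expression in $dv$, $dw$ and $(d\varphi)^{-1}$, the map $\varphi\mapsto Q_\varphi$ is real analytic into $L^2\big(H^s;H^{s-1}\big)$. It therefore remains to show that
\[
\varphi\mapsto T_\varphi:=R_\varphi\,\nabla\Delta^{-1}\,R_{\varphi^{-1}}
\]
is real analytic from $\Ds^s(\R^n)$ into $L\big(H^{s-1}(\R^n);H^s(\R^n;\R^n)\big)$, the bounded linear maps; composing the two factors then gives that $\Gamma$ is real analytic into $L^2\big(H^s;H^s\big)$.

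I expect this last step to be the main obstacle, for two reasons. First, $\nabla\Delta^{-1}$ is a nonlocal operator of order $-1$, so the required gain of one derivative (from $H^{s-1}$ up to $H^s$) is exactly what compensates the derivative lost in $Q_\varphi$; this gain is genuine but is not visible term by term after conjugation by $R_\varphi$ and $R_{\varphi^{-1}}$. Second, one must make sense of $\nabla\Delta^{-1}$ on $H^{s-1}(\R^n)$ despite the low-frequency singularity of its symbol $\xi/|\xi|^2$. Both points I would handle by passing to the explicit integral kernel: with $E$ the fundamental solution of $\Delta$ on $\R^n$ and the change of variables $y=\varphi(z)$,
\[
(T_\varphi g)(x)=\int_{\R^n}\nabla E\big(\varphi(x)-\varphi(z)\big)\,g(z)\,\det d\varphi(z)\,dz,\qquad \nabla E(x)=c_n\,\frac{x}{|x|^n}.
\]
This exhibits $T_\varphi$ as a weakly singular integral operator whose kernel depends explicitly on $\varphi$, makes the order $-1$ smoothing manifest, and reduces the claim to analyticity and mapping estimates for such kernel operators of the kind available in the composition and flow-map literature cited in the introduction.

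Finally I would prove the equivalence with \eqref{E}. For the direction from \eqref{E} to \eqref{geodesic_eq}, any solution $u$ integrates to a $\varphi\in C^1\big([0,T];\Ds^s(\R^n)\big)$ with $\partial_t\varphi=u\circ\varphi$ by the fact recalled in the introduction, and the computation above (valid since $\operatorname{div}u=0$ gives $\Delta p=-\operatorname{tr}((du)^2)$) shows $\partial_t^2\varphi=\Gamma_\varphi(\partial_t\varphi,\partial_t\varphi)$ with $\varphi(0)=\operatorname{id}$ and $\partial_t\varphi(0)=u_0$. For the converse, given $\varphi$ solving \eqref{geodesic_eq} I set $u:=\partial_t\varphi\circ\varphi^{-1}$ and $p:=-\Delta^{-1}\operatorname{tr}((du)^2)$; unwinding the definition of $\Gamma_\varphi$ gives $\partial_t u+(u\cdot\nabla)u=-\nabla p$ by construction, so that \eqref{RE} holds. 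The only remaining point is that $\operatorname{div}u$ stays zero: taking the divergence of the momentum equation and using $\Delta p=-\operatorname{tr}((du)^2)$ together with $\operatorname{div}\big((u\cdot\nabla)u\big)=\operatorname{tr}((du)^2)+u\cdot\nabla(\operatorname{div}u)$ yields the transport equation
\[
\partial_t(\operatorname{div}u)+u\cdot\nabla(\operatorname{div}u)=0,
\]
so that $\operatorname{div}u_0=0$ forces $\operatorname{div}u(t)\equiv0$ by uniqueness along the flow $\varphi$. This closes the equivalence and completes the proof.
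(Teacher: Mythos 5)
Your overall strategy (differentiate the flow relation twice, read off $\Gamma_\varphi = R_\varphi\,\nabla\Delta^{-1}(\cdots)\,R_{\varphi^{-1}}$ conjugated through the quadratic pressure term, prove analyticity of the conjugated operator, then unwind for the equivalence) is the same as the paper's, and your equivalence argument, including the transport equation for $\operatorname{div}u$, matches what the paper delegates to Proposition \ref{prop_alternative}. The gap is in the step you yourself flag as the main obstacle, and your proposed remedy does not close it. The operator $\nabla\Delta^{-1}$ is \emph{not} bounded from $H^{s-1}(\R^n)$ to $H^s(\R^n;\R^n)$: the symbol $\xi/|\xi|^2$ is not bounded near $\xi=0$, and for $g\in H^{s-1}(\R^n)$ with $\mathcal F[g]$ nonvanishing at the origin (e.g.\ any integrable $g$ with nonzero mean, in dimension $n=2$), $\nabla\Delta^{-1}g$ fails even to lie in $L^2$. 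Passing to the kernel $\nabla E(x)=c_n x/|x|^n$ does not help, because the obstruction is not the weak singularity on the diagonal (which is indeed harmless) but the slow decay of the kernel at infinity -- equivalently the low-frequency singularity -- so your $T_\varphi$ and hence your $\Gamma_\varphi$ is not well defined as a bilinear map into $H^s(\R^n;\R^n)$. This is precisely why the paper introduces the Fourier cutoff $\chi(D)$ and splits $B=B_1+B_2$: at high frequencies $\Delta^{-1}(1-\chi(D))$ gains two derivatives on the non-divergence form $\sum\partial_j v_k\,\partial_k w_j$, while at low frequencies the quadratic term is rewritten in divergence form $\sum\partial_j\partial_k(v_jw_k)$ so that $\Delta^{-1}\chi(D)\partial_j\partial_k$ has a bounded symbol; a single unsplit formula cannot achieve both.

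The second, related gap is that even after a correct splitting, the analyticity of $\varphi\mapsto R_\varphi\,\Delta^{-1}(1-\chi(D))\,R_{\varphi^{-1}}$ in operator norm is the technical heart of the theorem, and ``analyticity and mapping estimates for such kernel operators of the kind available in the literature'' is not an argument -- on the whole space $\R^n$ (as opposed to weighted spaces or the torus) no such off-the-shelf result is cited or available. The paper's mechanism is worth internalizing: one writes $\Delta^{-1}(1-\chi(D))=A^{-1}-\chi(D)$ with $A=\chi(D)+\Delta(1-\chi(D))$, observes that conjugation of the local differential operator $\Delta$ by $R_\varphi$ is an explicitly rational (hence analytic) expression in $d\varphi$ and $1/\det d\varphi$ via \eqref{multiplication}, handles the smoothing pieces through the band-limited space $H^\infty_\Xi(\R^n)$ and Lemmas \ref{lemma_composition1}--\ref{lemma_composition2}, and then invokes the analyticity of operator inversion on $GL$ via the Neumann series. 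Without the cutoff, the divergence-form rewriting of the low-frequency part, and some substitute for this inversion trick, your proof does not go through.
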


By ODE theory -- see Dieudonn\'e \cite{dieudonne} -- and the continuity of the composition map \eqref{composition} we immediately get the following corollary (this result, using a different method, goes back to Kato \cite{kato})

\begin{Coro}\label{coro_kato}
Let $n \geq 2$ and $s > n/2+1$. Then \eqref{E} is locally well-posed in $H^s(\R^n)$.
\end{Coro}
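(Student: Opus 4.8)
The plan is to turn the formal Lagrangian computation into a genuine identity and then isolate the one hard analytic fact. First I would eliminate the pressure in Eulerian coordinates: taking the divergence of the momentum equation in \eqref{E} and using $\operatorname{div} u = 0$ gives $\Delta p = -\operatorname{tr}\big((du)^2\big)$, where $\operatorname{tr}\big((du)^2\big)=\sum_{j,k}(\partial_j u_k)(\partial_k u_j)$. Since $du\in H^{s-1}$ and $H^{s-1}(\Rn)$ is a Banach algebra (as $s-1>n/2$), this scalar lies in $H^{s-1}$; moreover it equals $\operatorname{div}\big((u\cdot\nabla)u\big)$, so its Fourier transform vanishes to second order at the origin and the order $-1$ multiplier $\nabla\Delta^{-1}$ gains a full derivative, placing $\nabla p=-\nabla\Delta^{-1}\operatorname{tr}\big((du)^2\big)$ in $H^s$. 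Differentiating $\partial_t\varphi=u\circ\varphi$ once more in $t$ and inserting the momentum equation yields $\partial_t^2\varphi=(-\nabla p)\circ\varphi$. Writing $u=\partial_t\varphi\circ\varphi^{-1}$ and using $(du)\circ\varphi=d(\partial_t\varphi)\,(d\varphi)^{-1}$, I am led to define
\[
\Gamma_\varphi(v,w):=\Big[\nabla\Delta^{-1}\Big(\operatorname{tr}\big(dv\,(d\varphi)^{-1}\,dw\,(d\varphi)^{-1}\big)\circ\varphi^{-1}\Big)\Big]\circ\varphi ,
\]
which is symmetric and bilinear in $(v,w)$ by cyclicity of the trace and reproduces $\partial_t^2\varphi=\Gamma_\varphi(\partial_t\varphi,\partial_t\varphi)$.

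For fixed $\varphi$, well-definedness and continuity of $\Gamma_\varphi$ as an element of $L^2\big(H^s(\Rn;\Rn);H^s(\Rn;\Rn)\big)$ reduce to bookkeeping with \eqref{multiplication} and \eqref{composition}. Indeed $\varphi\mapsto d\varphi$ is affine into $H^{s-1}$; the matrix inverse $(d\varphi)^{-1}=\operatorname{adj}(d\varphi)/\det d\varphi$ lies in $\operatorname{id}+H^{s-1}$ because $\det d\varphi$ is bounded below (it is positive and tends to $1$ at infinity, while $\det d\varphi-1\in H^{s-1}\hookrightarrow C^0$ vanishes at infinity); the algebra property of $H^{s-1}$ then puts the trace term in $H^{s-1}$, $\nabla\Delta^{-1}$ carries it to $H^s$, and the composition map \eqref{composition} at the top index $s'=s$ returns an element of $H^s$. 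Thus $\Gamma_\varphi$ is a continuous symmetric bilinear map.

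The real analyticity of $\varphi\mapsto\Gamma_\varphi$ is the heart of the matter, and I expect it to be the main obstacle. The difficulty is that the pressure term appears in Lagrangian coordinates as the conjugation $R_\varphi\circ\nabla\Delta^{-1}\circ R_{\varphi^{-1}}$ of the nonlocal operator $\nabla\Delta^{-1}$ by the composition operators $R_\psi(f)=f\circ\psi$, and these composition operators are only continuous — not even differentiable — on a fixed Sobolev space. The whole conjugation is nonetheless analytic precisely because $\nabla\Delta^{-1}$ smooths by order one, exactly offsetting the derivative loss of the two compositions. I would establish this by realizing $\nabla\Delta^{-1}$ through its convolution kernel $k$, homogeneous of degree $-(n-1)$ and real analytic off the origin, and writing
\[
\big[R_\varphi\,\nabla\Delta^{-1}\,R_{\varphi^{-1}}h\big](x)=\int_{\Rn}k\big(\varphi(x)-\varphi(y)\big)\,\det d\varphi(y)\,h(y)\,dy .
\]
Near the diagonal one has $\varphi(x)-\varphi(y)=\big(\int_0^1 d\varphi(y+t(x-y))\,dt\big)(x-y)$, comparable to $x-y$, so the kernel is integrable with the one-derivative gain; splitting into near- and far-diagonal pieces and Taylor-expanding $k$ and $\varphi$ about a base point produces a power series in $\varphi-\varphi_0$ converging in $L\big(H^{s-1}(\Rn;\R);H^s(\Rn;\Rn)\big)$, which is real analyticity. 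An equivalent route conjugates $\Delta$ itself: the pulled-back operator $R_\varphi\circ\Delta\circ R_{\varphi^{-1}}$ is uniformly elliptic in divergence form with coefficients that are algebraic — hence analytic — in $d\varphi$ and $1/\det d\varphi$, so that $\varphi\mapsto R_\varphi\Delta R_{\varphi^{-1}}$ is analytic and analyticity of its inverse follows from analyticity of operator inversion; here one must prove the elliptic estimate in Lagrangian coordinates in the sharp spaces so that no net derivative is lost, and that sharpness is exactly the delicate point.

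Finally I would prove the stated equivalence with \eqref{E}. Given $\varphi$ solving \eqref{geodesic_eq} with $\operatorname{div} u_0=0$, set $u:=\partial_t\varphi\circ\varphi^{-1}$; unwinding the definition of $\Gamma$ shows $\partial_t u+(u\cdot\nabla)u=-\nabla p$ with $p$ determined by $\Delta p=-\operatorname{tr}\big((du)^2\big)$ for this (not yet divergence-free) $u$. Taking the divergence and using the algebraic identity $\operatorname{div}\big((u\cdot\nabla)u\big)=\operatorname{tr}\big((du)^2\big)+(u\cdot\nabla)(\operatorname{div} u)$ yields $\partial_t(\operatorname{div} u)=-(u\cdot\nabla)(\operatorname{div} u)$, a linear transport equation; since $\operatorname{div} u_0=0$, uniqueness forces $\operatorname{div} u\equiv 0$, so $u$ solves \eqref{E}. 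Conversely, any $u$ solving \eqref{E} integrates to a $\varphi\in C^1\big([0,T];\Ds^s(\Rn)\big)$ by the result quoted in the introduction, and reversing the computation of the first paragraph shows $\varphi$ solves \eqref{geodesic_eq}. Corollary~\ref{coro_kato} then follows by applying the Picard--Lindelöf theorem on Banach spaces to \eqref{geodesic_eq}, using the analyticity of $\Gamma$ just established and the continuity of \eqref{composition}.
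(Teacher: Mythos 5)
Your overall strategy --- recast \eqref{E} as a geodesic equation on $\Ds^s(\Rn)$ with an analytic Christoffel map and then invoke Banach-space ODE theory together with the continuity of \eqref{composition} --- is exactly the paper's (the corollary itself is deduced from Theorem~\ref{th_main} in one line), and you correctly locate the crux in the analyticity of $\varphi\mapsto\Gamma_\varphi$. But both places where your sketch becomes vague are the same place, and it is a genuine gap: on $\Rn$ the operator $\Delta^{-1}$ does not exist on the spaces you are using, because the symbol $-|\xi|^2$ vanishes at $\xi=0$. Concretely: (i) your $\Gamma_\varphi(v,w)$ applies $\nabla\Delta^{-1}$ to $\operatorname{tr}\bigl(d(v\circ\varphi^{-1})\,d(w\circ\varphi^{-1})\bigr)$ for \emph{arbitrary} $v,w\in H^s(\Rn;\Rn)$; the second-order vanishing of the Fourier transform at the origin that you invoke holds only for divergence-free arguments (for general $v,w$ this expression differs from a double divergence by a term $(\operatorname{div}v')(\operatorname{div}w')$ whose Fourier transform need not vanish at $0$), and for $n=2$ the resulting factor $|\xi|^{-1}$ is not square-integrable near the origin, so your $\Gamma_\varphi$ is not a well-defined bounded bilinear map on $H^s\times H^s$. (ii) Your second route to analyticity inverts $R_\varphi\Delta R_{\varphi^{-1}}$ by ``analyticity of operator inversion'', but $\Delta:H^s(\Rn)\to H^{s-2}(\Rn)$ is not an isomorphism (it is injective but not bounded below), so the Neumann-series argument does not apply; the delicate point is not the sharpness of the elliptic estimate but invertibility at frequency zero.

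The paper's fix for both is the Fourier cutoff $\chi(D)$: it defines $B=B_1+B_2$ using the form $\sum\partial_jv_k\partial_kw_j$ only on high frequencies, where $\Delta^{-1}\bigl(1-\chi(D)\bigr):H^{s-1}\to H^{s+1}$ is bounded, and the double-divergence form $\sum\partial_j\partial_k(v_jw_k)$ on low frequencies, where $\Delta^{-1}\chi(D)\partial_j\partial_k:H^s\to H^{s+1}$ is bounded; the two forms coincide on divergence-free fields, so the resulting $\Gamma$ still reproduces $-\nabla p$ along solutions. For analyticity the paper replaces $\Delta$ by $A=\chi(D)+\Delta\bigl(1-\chi(D)\bigr)$, which \emph{is} invertible $H^s\to H^{s-2}$, so that your route (b) goes through verbatim for $R_\varphi A R_{\varphi}^{-1}$ (Lemma~\ref{lemma_b1}); the low-frequency piece $B_2$ is handled separately via the composition Lemmas~\ref{lemma_composition1} and \ref{lemma_composition2} for the band-limited space $H^\infty_\Xi$. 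Your kernel route (a) could in principle substitute for this, but the non-integrable far-field decay $|z|^{-(n-1)}$ of the kernel is the same zero-frequency obstruction seen in physical space, and your sketch does not address it. The transport-equation argument for propagation of $\operatorname{div}u=0$ and the final Picard--Lindel\"of step are correct and match the paper.
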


Connected to a geodesic equation like \eqref{geodesic_eq} is the notion of an exponential map -- see Lang \cite{lang}. The domain of definition for the exponential map is the set $U \subseteq H^s(\R^n;\R^n)$ consisting of initial values $u_0 \in H^s(\R^n;\R^n)$ for which the geodesic equation \eqref{geodesic_eq} has a solution on the interval $[0,1]$.  It turns out that $U$ is star-shaped with respect to $0$ and is an open neighborhood of $0$. With this we define 

\begin{Def}\label{def_exp}
The exponential map is defined as
\[
 \exp:U \to \Ds^s(\R^n),\quad u_0 \mapsto \varphi(1;u_0)
\]
where $\varphi(1;u_0)$ denotes the value of the solution $\varphi$ of \eqref{geodesic_eq} at time $t=1$ for the initial condition $\partial_t \varphi(0)=u_0$.
\end{Def}

By ODE theory we know that $exp$ is a real analytic map. Moreover we can describe every solution of \eqref{geodesic_eq} by considering the curves $t \mapsto \exp(t u_0)$ as is usual for geodesic equations. A further corollary of Theorem \ref{th_main} is

\begin{Coro}\label{coro_analytic_trajectory}
The trajectories of the fluid particles moving according to \eqref{E} are analytic.
\end{Coro}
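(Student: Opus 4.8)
The plan is to reduce the statement to the fact, already recorded above, that the curve $t \mapsto \varphi(t)$ solving the geodesic equation \eqref{geodesic_eq} is real analytic as a map into $\Ds^s(\R^n)$, and then to transfer this analyticity to the pointwise trajectories $t \mapsto \varphi(t,x)$ by composing with point evaluation.

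First I would make precise the analyticity of the flow in time. By Theorem \ref{th_main} the Christoffel map $\Gamma$ is real analytic, so \eqref{geodesic_eq} is a second order ODE on the (real analytic) Banach manifold $\Ds^s(\R^n)$ whose right-hand side is real analytic. The analytic dependence theorem for ODEs on Banach spaces (see Dieudonn\'e \cite{dieudonne} together with Whittlesey \cite{analyticity}) then guarantees that every solution $\varphi$ is real analytic in $t$ with values in $\Ds^s(\R^n) \subseteq \operatorname{id} + H^s(\R^n;\R^n)$. Equivalently, as noted after Definition \ref{def_exp}, any solution can be written as $t \mapsto \exp(t u_0)$, and since $\exp$ is real analytic and $t \mapsto t u_0$ is linear, the composition is a real analytic curve. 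In either case, locally around each time $t_0$ one has a norm-convergent expansion $\varphi(t) - \operatorname{id} = \sum_{k \geq 0} a_k (t-t_0)^k$ with coefficients $a_k \in H^s(\R^n;\R^n)$ and positive radius of convergence.

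Next I would fix a particle position $x \in \R^n$ and consider the evaluation map $\operatorname{ev}_x : H^s(\R^n;\R^n) \to \R^n$, $f \mapsto f(x)$. Because $s > n/2+1 > n/2$, the Sobolev imbedding yields a continuous inclusion $H^s(\R^n;\R^n) \hookrightarrow C^0(\R^n;\R^n)$, so that $|\operatorname{ev}_x(f)| \leq \|f\|_{C^0} \leq C\|f\|_{H^s}$; thus $\operatorname{ev}_x$ is a bounded linear map, and in particular real analytic. The trajectory of the particle starting at $x$ is then
\[
 t \mapsto \varphi(t,x) = x + \operatorname{ev}_x\big(\varphi(t) - \operatorname{id}\big),
\]
i.e. the sum of the constant $x$ and the composition of the real analytic curve $t \mapsto \varphi(t) - \operatorname{id}$ with the bounded linear map $\operatorname{ev}_x$. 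Since the composition of real analytic maps is real analytic, $t \mapsto \varphi(t,x)$ is real analytic on the interval of existence, which is the assertion. Concretely, applying $\operatorname{ev}_x$ term by term to the expansion above gives $\varphi(t,x) = x + \sum_{k \geq 0} a_k(x)(t-t_0)^k$ with the same radius of convergence, the majorization $|a_k(x)| \leq C\|a_k\|_{H^s}$ showing that convergence in $H^s$ forces convergence of the evaluated series.

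The only genuinely nontrivial ingredient is the first step, the analyticity in $t$ of the solution curve, which rests on the real analyticity of $\Gamma$ from Theorem \ref{th_main} combined with the analytic ODE theory; once that is in hand, the passage to the pointwise trajectories is immediate because point evaluation is a bounded, hence analytic, linear functional. I would therefore expect the careful invocation of the ODE-analyticity result, rather than the Sobolev estimate, to be the main point requiring attention.
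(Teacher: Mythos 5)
Your proposal is correct and follows essentially the same route as the paper: both deduce the analyticity of the $H^s$-valued curve $t \mapsto \varphi(t)-\operatorname{id}$ from the analyticity of $\Gamma$ (equivalently, of $\exp$) via analytic ODE theory, and then transfer it to the pointwise trajectory $t \mapsto \varphi(t,x)$ by composing with the evaluation map at $x$, which is a bounded linear functional by the Sobolev imbedding since $s > n/2+1$. Your added detail on the norm-convergent power series expansion and the term-by-term evaluation is a harmless elaboration of the same argument.
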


\begin{proof}[Proof of Corollary \ref{coro_analytic_trajectory}]
Fix $x \in \R^n$ and define $\varphi(t):=\exp(t u_0)$. Then the trajectory of the fluid particle which starts at time $t=0$ at $x$ is given by $t \mapsto \varphi(t,x)$. By Theorem \ref{th_main} we know that
\[
 [0,T] \mapsto H^s(\R^n;\R^n),\quad t \mapsto \varphi(t)-\operatorname{id}
\]
is analytic. Here $T > 0$ is any time up to which the fluid motion exists for sure. As $s > n/2+1$ we know by the Sobolev imbedding that the evaluation map at $x \in \R^n$ 
\[
 H^s(\R^n) \to \R,\quad f \mapsto f(x)
\]
is a continuous linear map. Thus $t \mapsto \varphi(t,x)-x$ is analytic. Hence the claim.
\end{proof}

\emph{Related work}: To use an ODE-type approach for \eqref{E} via a Lagrangian formulation is already present in the works of Lichtenstein \cite{lichtenstein} and Gunter \cite{gunter}. One can also get analyticity in Lagrangian coordinates by using their successive approximation procedure.\\
The idea to express \eqref{E} as a geodesic equation on the ''Lie group'' $\Ds$, the group of diffeomorphisms, goes back to Arnold \cite{arnold}. In Ebin, Marsden \cite{ebin_marsden}, Ebin and Marsden worked out Arnold's idea by proving the analog of Theorem \ref{th_main} for the Sobolev spaces $H^s(M)$, where $M$ is a compact, smooth and oriented manifold of dimension $n$ and $s > n/2+1$, with the difference that they proved the Christoffel map $\Gamma$ to be smooth and not analytic (it is not so clear to us whether $\Gamma$ is analytic for all these $M$). Later Cantor \cite{cantor} showed the analog of Theorem \ref{th_main} for weighted Sobolev spaces on the whole space $H^s_w(\R^n)$, $s > n/2+1$ (Cantor stated it with $\Gamma$ smooth, but one can show that his $\Gamma$ is analytic). In Serfati \cite{serfati} the analog of Theorem \ref{th_main} was shown for $C^{k,\alpha}$-spaces over $\R^n$, $k \geq 1$ and $0 < \alpha < 1$. Most recently analytic dependence in the Lagrangian coordinates was shown to be true in the case of Sobolev spaces $H^s(\mathbb T^n)$, $s > n/2+1$, in Shnirelman \cite{shnirelman} and in the case of H\"older spaces $C^{1,\alpha}(\mathbb T^n)$, $0 < \alpha < 1$, in Frisch, Zheligovsky \cite{frisch} for fluid motion in the $n$-dimensional torus $\mathbb T^n=\R^n/\mathbb Z^n$.\\
As an application of the results of this paper we prove in Inci \cite{nonuniform} that the solution map of the incompressible Euler equation is nowhere locally Lipschitz and nowhere differentiable.\\ \\
This paper is more or less an excerpt from the thesis Inci \cite{thesis}. So omitted proofs or references where they can be found are given in Inci \cite{thesis}.

\section{Alternative Eulerian description}\label{section_alternative}

The goal of this section is to give an alternative formulation of \eqref{E} by replacing $\nabla p$ with an expression in $u$. For this we will use an idea of Chemin \cite{chemin}. Throughout this section we assume $n \geq 2$ and $s > n/2+1$.\\
To motivate the approach, we apply $\operatorname{div}$ to the first equation in \eqref{E} and use $\operatorname{div} u=0$ to get
\begin{equation}\label{laplace_p}
 -\Delta p = \sum_{j,k=1}^n \partial_j u_k \partial_k u_j = \sum_{j,k=1}^n \partial_j \partial_k (u_j u_k).
\end{equation}
In order to invert the Laplacian $\Delta$ we will use a cut-off in Fourier space. For this we denote by $\chi$ the characteristic function of the closed unit ball in $\R^n$, i.e. $\chi(\xi)=1$ for $|\xi| \leq 1$ and $\chi(\xi)=0$ otherwise. The continuous linear operator $\chi(D)$ on $L^2(\R^n):=L^2_\R(\R^n)$ is defined by
\[
 \chi(D):L^2(\R^n) \to L^2(\R^n),\quad f \mapsto \mathcal F^{-1}\left[\chi(\xi)\mathcal F[f](\xi) \right]
\]
where $\mathcal F$ is the Fourier transform and $\mathcal F^{-1}$ its inverse. We define the Fourier transform of $g \in L^1(\R^n)$ as the following complex-valued function $\mathcal F[g]:\R^n \to \mathbb C$ (with the usual extension to $L^2(\R^n)$)
\[
 \mathcal F[g](\xi) := \frac{1}{(2\pi)^{n/2}} \int_{\R^n} e^{-ix \cdot \xi} g(x) \;dx,\quad \xi \in \R^n
\]
where $x \cdot \xi=x_1 \xi_1 + \ldots + x_n \xi_n$ is the Euclidean inner product in $\R^n$. We have for $s_1,s_2 \geq 0$
\begin{equation}\label{chi_smoothing}
 ||\chi(D)f||_{s_1+s_2} \leq 2^{s_2/2} ||f||_{s_1},\quad \forall f \in H^{s_1}(\R^n)
\end{equation}
where $||g||_{s'}:=||\,(1+|\xi|^{s'/2}) \, |\mathcal F[g](\xi)| \;||_{L^2}$ for $g \in H^{s'}(\R^n)$, $s' \geq 0$. We use \eqref{laplace_p} to rewrite $-\nabla p$
\[
 -\nabla p = \nabla \left(\Delta^{-1} \big(1-\chi(D)\big) \sum_{j,k=1}^n \partial_j u_k \partial_k u_j + \Delta^{-1} \chi(D) \sum_{j,k=1}^n \partial_j \partial_k (u_j u_k)\right).
\]
Using this expression we replace \eqref{E} by
\begin{equation}\label{alternative_E}
 \partial_t u + (u \cdot \nabla) u = \nabla B(u,u),\quad u(0)=u_0 \in H^s(\R^n;\R^n)
\end{equation}
where $B(v,w)=B_1(v,w)+B_2(v,w)$ for $v,w \in H^s(\R^n;\R^n)$ with
\[
 B_1(v,w)=\Delta^{-1} \big(1-\chi(D)\big) \sum_{j,k=1}^n \partial_j v_k \partial_k w_j
\]
and
\[
 B_2(v,w)=\Delta^{-1} \chi(D) \sum_{j,k=1}^n \partial_j \partial_k(v_j w_k).
\]
As $\Delta^{-1}\big(1-\chi(D)\big):H^{s-1}(\R^n) \to H^{s+1}(\R^n)$ is a continuous linear map we get by the Banach algebra property of $H^{s-1}(\R^n)$ that
\begin{eqnarray*}
 B_1:H^s(\R^n;\R^n) \times H^s(\R^n;\R^n) &\to& H^{s+1}(\R^n)\\
(v,w) &\mapsto& \Delta^{-1}\big(1-\chi(D)\big) \sum_{j,k=1}^n \partial_j v_k \partial_k w_j
\end{eqnarray*}
is a continuous bilinear map. And as $\Delta^{-1} \chi(D) \partial_j \partial_k:H^s(\R^n) \to H^{s+1}(\R^n)$ is a continuous linear map for any $1 \leq j,k \leq n$ we get by the Banach algebra property of $H^s(\R^n)$ that
\begin{eqnarray*}
 B_2:H^s(\R^n;\R^n) \times H^s(\R^n;\R^n) &\to& H^{s+1}(\R^n)\\
(v,w) &\mapsto& \Delta^{-1} \chi(D) \sum_{j,k=1}^n \partial_j \partial_k (v_j w_k)
\end{eqnarray*}
is a continuous bilinear map. Altogether we see that
\[
 \nabla B:H^s(\R^n;\R^n) \times H^s(\R^n;\R^n) \to H^s(\R^n;\R^n)
\]
is a continuous bilinear map. Equation \eqref{alternative_E} is to be understood in the sense that $u$ is a solution to \eqref{alternative_E} on $[0,T]$ for some $T>0$ if $u \in C^0\big([0,T];H^s(\R^n;\R^n)\big)$ with
\begin{equation}\label{alternative_RE}
 u(t) = u_0 + \int_0^t \nabla B\big(u(\tau),u(\tau)\big) - (u(\tau) \cdot \nabla) u(\tau) \;d\tau 
\end{equation}
for any $0 \leq t \leq T$. By the Banach algebra property of $H^{s-1}(\R^n)$ the integrand in \eqref{alternative_RE} lies in $C^0\big([0,T];H^{s-1}(\R^n;\R^n)\big)$. \\
To consider \eqref{alternative_E} instead of \eqref{E} is justified by the following proposition 

To consider \eqref{alternative_E} instead of \eqref{E} is justified by Proposition \ref{prop_alternative}. Proposition \ref{prop_alternative} shows in particular that for solutions of \eqref{alternative_E} the condition $\operatorname{div} u(t)=0$ is preserved if it is true for $t=0$.

\section{Proof of Theorem \ref{th_main}}\label{section_proof}

The goal of this section is to prove Theorem \ref{th_main}. To do that we will formulate the alternative equation \eqref{alternative_E} in Lagrangian coordinates. As usual we assume $n \geq 2$ and $s > n/2+1$. To motivate the approach consider $u$ solving \eqref{alternative_E} and $\varphi$ its flow, i.e. $\varphi$ is determined by the relation $\partial_t \varphi = u \circ \varphi$. Taking the $t$-derivative in this relation we get
\[
 \partial_t^2 \varphi = \big(\partial_t u + (u \cdot \nabla) u \big) \circ \varphi = \nabla B(u,u) \circ \varphi. 
\]
Replacing $u$ by $u=\partial_t \varphi \circ \varphi^{-1}$ we get
\[
 \partial_t^2 \varphi = \nabla B(\partial_t \varphi \circ \varphi^{-1},\partial_t \varphi \circ \varphi^{-1}) \circ \varphi.
\]
So our candidate for the $\Gamma$ in Theorem \ref{th_main} is 
\begin{equation}\label{def_gamma}
 \Gamma_\varphi(v,w) := \nabla B(v \circ \varphi^{-1},w \circ \varphi^{-1}) \circ \varphi. 
\end{equation}
The key ingredient for the proof of Theorem \ref{th_main} is the following proposition

\begin{Prop}\label{prop_analytic_gamma}
The map
\[
 \Gamma:\Ds^s(\R^n) \to L^2\big(H^s(\R^n;\R^n);H^s(\R^n;\R^n)\big),\quad \varphi \mapsto [ (v,w) \mapsto \Gamma_\varphi(v,w)]
\]
with $\Gamma_\varphi(v,w)$ as in \eqref{def_gamma} is real analytic.
\end{Prop}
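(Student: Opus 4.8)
The plan is to exhibit $\Gamma$ as a finite algebraic combination of maps that are each manifestly real analytic, the only nontrivial ingredient being the analytic dependence on $\varphi$ of certain conjugated Fourier multipliers. The starting point is to rewrite \eqref{def_gamma} so that the inner composition with $\varphi^{-1}$ is absorbed into multiplication by analytic coefficients. Writing $\tilde v=v\circ\varphi^{-1}$ and using the chain rule one gets $(\partial_j\tilde v_k)\circ\varphi=\sum_l A_{lj}\,\partial_l v_k$ with $A:=(d\varphi)^{-1}$. Hence, setting $R_\psi h:=h\circ\psi$, $\vec T_1:=\nabla\Delta^{-1}(1-\chi(D))$ (a multiplier of order $-1$) and $\vec S_{jk}:=\nabla\Delta^{-1}\chi(D)\partial_j\partial_k$ (a multiplier with compactly supported symbol), one obtains
\[
\Gamma_\varphi(v,w)=\big(R_\varphi\vec T_1 R_{\varphi^{-1}}\big)\,Q_\varphi(v,w)+\sum_{j,k}\big(R_\varphi\vec S_{jk}R_{\varphi^{-1}}\big)\,(v_j w_k),
\]
where $Q_\varphi(v,w):=\sum_{j,k,l,m}A_{lj}A_{mk}\,\partial_l v_k\,\partial_m w_j\in H^{s-1}$. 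In this form every composition acts at Sobolev level $\le s$, so it is covered by \eqref{composition}, and both $Q_\varphi(v,w)$ and $v_j w_k$ are bounded bilinear in $(v,w)$ by \eqref{multiplication}.

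Next I would check the elementary analytic building blocks. The map $\varphi\mapsto d\varphi-I\in H^{s-1}$ is affine. Since $d\varphi\to I$ at infinity (as $d\varphi-I\in H^{s-1}\hookrightarrow C^0$) and $\det d\varphi>0$, the determinant is bounded below, so $1/\det d\varphi$ lies in $1+H^{s-1}$ and depends real analytically on $\varphi$ through the analytic functional calculus on the Banach algebra $H^{s-1}\cap L^\infty$; by Cramer's rule $A=(d\varphi)^{-1}$ is then a polynomial in $d\varphi$ and $1/\det d\varphi$, hence $\varphi\mapsto A$ is real analytic into the matrix-valued Sobolev space. Consequently $\varphi\mapsto Q_\varphi(v,w)$ is real analytic into $H^{s-1}$ and bilinear in $(v,w)$. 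As multiplication $H^{s-1}\times H^{s-1}\to H^{s-1}$ and the evaluation $\mathcal L(X;Y)\times X\to Y$ are continuous (multi)linear, hence analytic, the whole Proposition reduces to the single statement that $\varphi\mapsto R_\varphi\vec T_1 R_{\varphi^{-1}}$ is real analytic into $\mathcal L(H^{s-1};H^s)$ and $\varphi\mapsto R_\varphi\vec S_{jk}R_{\varphi^{-1}}$ is real analytic into $\mathcal L(H^s;H^s)$.

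For this conjugation lemma I would pass to the kernel. If $T$ has convolution kernel $K=\mathcal F^{-1}[m]$, the substitution $y=\varphi(z)$ eliminates $\varphi^{-1}$ and leaves
\[
(R_\varphi T R_{\varphi^{-1}}h)(x)=\int_{\Rn}K\big(\varphi(x)-\varphi(z)\big)\,\det d\varphi(z)\,h(z)\,dz,
\]
in which only $\varphi$ and its Jacobian occur, both polynomial (hence analytic) in $f:=\varphi-\operatorname{id}$. Expanding $K\big((x-z)+(f(x)-f(z))\big)=\sum_\alpha\frac{1}{\alpha!}(\partial^\alpha K)(x-z)\prod_{i=1}^n\big(f_i(x)-f_i(z)\big)^{\alpha_i}$ and multiplying by the polynomial $\det d\varphi$ produces a formal power series $\sum_N P_N(f)$, where each $P_N$ is a degree-$N$ homogeneous polynomial in $f$ with values in the operator space. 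Real analyticity will follow once this series is shown to converge in operator norm on a ball of $H^s$. The band-limited case $\vec S_{jk}$ is immediate: $\widehat{\partial^\alpha K_{jk}}$ is supported in the unit ball with $L^\infty$-norm bounded uniformly in $\alpha$ (since $|\xi^\alpha|\le1$ there), so each $P_N$ is bounded on every $H^\sigma$ with a constant $\le C^N/N!$-type, giving convergence with infinite radius.

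The genuine difficulty, and what I expect to be the main obstacle, is the order $-1$ part $\vec T_1$. Here $\partial^\alpha K$ has symbol of size $|\xi|^{|\alpha|-1}$, so convolution alone \emph{loses} $|\alpha|-1$ derivatives. The saving mechanism is that each of the $|\alpha|$ difference factors $f_i(x)-f_i(z)$ vanishes on the diagonal and restores one derivative: formally $(x-z)^\alpha(\partial^\alpha K)(x-z)$ is the kernel of the multiplier with symbol $\partial_\xi^\alpha(\xi^\alpha m)$, which is again of order $-1$, uniformly in $\alpha$ up to a combinatorial factor $C^{|\alpha|}$. Making this rigorous requires writing $f_i(x)-f_i(z)=\int_0^1\nabla f_i\big(z+t(x-z)\big)\cdot(x-z)\,dt$, absorbing the factors $(x-z)$ into the kernel and the factors $\nabla f$ into multiplication operators, and then proving a uniform multilinear estimate: an operator with kernel $[\text{order }{-}1](x-z)\prod_i g_i\big(z+t_i(x-z)\big)$ is bounded $H^{s-1}\to H^s$ with norm $\le C^{|\alpha|}\prod_i\|f\|_{H^s}$, uniformly in $t_i\in[0,1]$, where the $g_i$ are entries of $df$. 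These growths $C^{|\alpha|}$ are then defeated by the $1/\alpha!$ of Taylor's expansion. I expect the delicate point to be exactly this estimate — controlling operators whose coefficients are evaluated along the interpolating segments $z+t_i(x-z)$, with constants growing at most geometrically in $|\alpha|$ — because the difference structure must be kept intact (multiplying it out destroys the cancellation), so the commutator/interpolation bookkeeping cannot be avoided. Once it is established, the series converges locally uniformly, $\varphi\mapsto R_\varphi\vec T_1 R_{\varphi^{-1}}$ is real analytic, and assembling the pieces through the displayed formula for $\Gamma_\varphi$ proves the Proposition.
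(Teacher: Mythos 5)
Your reduction of $\Gamma_\varphi$ to the two conjugated multipliers $R_\varphi\vec T_1R_{\varphi^{-1}}$ and $R_\varphi\vec S_{jk}R_{\varphi^{-1}}$, with the chain-rule coefficients $A=(d\varphi)^{-1}$ handled through the Banach algebra $H^{s-1}$ and Cramer's rule, is correct and coincides with the skeleton of the paper's argument (Lemmas \ref{lemma_b1} and \ref{lemma_b2}); your Taylor-expansion treatment of the band-limited piece $\vec S_{jk}$ is also a reasonable route to what the paper delegates to Lemmas \ref{lemma_composition1} and \ref{lemma_composition2}. The problem is the order $-1$ piece $\vec T_1$: there you do not give a proof but a program, and the step you yourself flag as ``the delicate point'' --- the uniform multilinear bound $\le C^{|\alpha|}\prod_i\|f\|_{H^s}$ for operators whose kernels carry the coefficients $\nabla f_i(z+t_i(x-z))$ along interpolating segments --- is precisely the hard analytic content and is left unestablished. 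It is not a routine verification: the symbols $\partial_\xi^\alpha(\xi^\alpha m)$ pick up factorial growth that must be tracked against the $1/\alpha!$ of the Taylor coefficients, the difference structure must not be multiplied out, and the whole expansion is based at $f=\varphi-\operatorname{id}$, so even if completed it yields analyticity only on a neighborhood of the identity and would still need an extra argument (e.g.\ re-expansion at an arbitrary base point, where the frozen kernel is no longer a convolution) to cover all of $\Ds^s(\R^n)$.

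The paper circumvents exactly this obstacle with the algebraic identity \eqref{convenient}: instead of conjugating the nonlocal smoothing operator $\Delta^{-1}(1-\chi(D))$ directly, it conjugates $A:=\chi(D)+\Delta(1-\chi(D))$, which is a local second-order operator plus band-limited corrections. Its conjugation $R_\varphi AR_{\varphi^{-1}}$ is elementary --- the chain rule turns $R_\varphi\partial_kR_{\varphi^{-1}}$ into multiplication by entries of $[d\varphi]^{-1}$, analytic by \eqref{multiplication}, and the $\chi(D)$ parts are covered by Lemmas \ref{lemma_composition1}--\ref{lemma_composition2} --- and then the analyticity of operator inversion $T\mapsto T^{-1}$ on $GL\big(H^s;H^{s-2}\big)$ (Neumann series) delivers $R_\varphi\Delta^{-1}(1-\chi(D))R_{\varphi^{-1}}=\big(R_\varphi AR_{\varphi^{-1}}\big)^{-1}-R_\varphi\chi(D)R_{\varphi^{-1}}$ analytically on the whole group, with no kernel expansion and no commutator estimates. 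If you want to keep your direct kernel approach you must actually prove the uniform segment estimate (this is essentially the hard lemma of the Frisch--Zheligovsky/Shnirelman line of argument); otherwise the cleaner fix is to adopt the inversion trick \eqref{convenient}.
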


Before we proof Proposition \ref{prop_analytic_gamma} we have to make some preparation. We introduce the following subspace of $L^2(\R^n)$
\[
 H^\infty_\Xi(\R^n) := \big\{ g \in L^2(\R^n) \;\big|\; \operatorname{supp} \mathcal F[g] \subseteq \Xi \big\}
\]
where $\Xi \subseteq \R^n$ is the closed unit ball and $\operatorname{supp} f$ denotes the support of a function $f$. The space $H^\infty_\Xi(\R^n)$ is a closed subspace of $L^2(\R^n)$, lies in $\cap_{s' \geq 0} H^{s'}(\R^n)$ and consists of entire functions (i.e. analytic functions on $\R^n$ with convergence radius $R=\infty$). Note that $\chi(D)$ maps $H^{s'}(\R^n)$, $s' \geq 0$, into $H^\infty_\Xi(\R^n)$. In the sequel we will also use the vector-valued analog $H^\infty_\Xi(\R^n;\R^n)=\{(f_1,\ldots,f_n) | f_k \in H^\infty_\Xi(\R^n), \; \forall 1 \leq k \leq n\}$. The space $H^\infty_\Xi$ has good properties with regard to the composition map (in contrast to its bad behaviour in the $H^s$ space -- see Inci \cite{thesis}):\\
Denoting  by $L(X;Y)$, $X,Y$ real Banach spaces, the real Banach space of continuous linear maps from $X$ to $Y$ we have

\begin{Lemma}\label{lemma_composition1}
Let $n \geq 2$ and $s > n/2+1$. Then 
\[
 \Ds^s(\R^n) \to L\big(H^\infty_\Xi(\R^n);H^s(\R^n)\big),\quad \varphi \mapsto [f \mapsto f \circ \varphi]
\]
is real analytic.
\end{Lemma}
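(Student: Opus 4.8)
The plan is to prove real analyticity by exhibiting, around an arbitrary base point $\varphi_0 \in \Ds^s(\R^n)$, a power series expansion of the operator-valued map $\varphi \mapsto L_\varphi$, $L_\varphi(f) = f\circ\varphi$, that converges in $L\big(H^\infty_\Xi(\R^n);H^s(\R^n)\big)$. The whole argument rests on two structural facts about the band-limited space $H^\infty_\Xi$. First, differentiation preserves it and does not increase the $L^2$ norm: for any multi-index $\alpha$ and $f\in H^\infty_\Xi(\R^n)$ one has $\partial^\alpha f \in H^\infty_\Xi(\R^n)$ and, since the Fourier multiplier $\xi \mapsto \xi^\alpha$ has modulus $\le 1$ on $\Xi$, $\|\partial^\alpha f\|_{L^2}\le\|f\|_{L^2}$; combined with \eqref{chi_smoothing} this yields a constant $C_s$, independent of $\alpha$, with $\|\partial^\alpha f\|_{H^s}\le C_s\|f\|_{L^2}$. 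Second, $H^s(\R^n)$ is a Banach algebra (take $s'=s$ in \eqref{multiplication}), so there is $C_a$ with $\|gh\|_{H^s}\le C_a\|g\|_{H^s}\|h\|_{H^s}$. The uniform-in-$\alpha$ bound on $\partial^\alpha f$ is exactly what will defeat the factorial growth produced by Taylor expanding $f$.

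As a warm-up and a building block I would first establish boundedness of $L_\varphi$, i.e. $L_\varphi\in L(H^\infty_\Xi;H^s)$ for each $\varphi=\operatorname{id}+w\in\Ds^s(\R^n)$. Writing the multivariable Taylor series of the entire function $f$,
\[
 f\circ\varphi = f(\operatorname{id}+w)=\sum_\alpha \frac{1}{\alpha!}\,(\partial^\alpha f)\,w^\alpha ,
\]
the two facts above give $\|(\partial^\alpha f)\,w^\alpha\|_{H^s}\le C_a^{|\alpha|}\,C_s\|f\|_{L^2}\,\|w\|_{H^s}^{|\alpha|}$, and summing over multi-indices via $\sum_{|\alpha|=k}\tfrac{1}{\alpha!}=\tfrac{n^k}{k!}$ produces the convergent bound
\[
 \|f\circ\varphi\|_{H^s}\le C_s\|f\|_{L^2}\sum_{k\ge 0}\frac{(nC_a\|w\|_{H^s})^k}{k!}=C_s\,e^{\,nC_a\|w\|_{H^s}}\,\|f\|_{L^2}.
\]
Thus $\|L_\varphi\|\le C_s\,e^{\,nC_a\|\varphi-\operatorname{id}\|_{H^s}}$.

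For analyticity I would repeat the expansion at a general base point $\varphi_0=\operatorname{id}+w_0$. For $\varphi=\varphi_0+h$ with $h\in H^s(\R^n;\R^n)$, Taylor expansion about $\varphi_0(x)$ gives
\[
 f\circ\varphi=\sum_{k\ge 0}P_k(h),\qquad P_k(h):=\sum_{|\alpha|=k}\frac{1}{\alpha!}\,\big((\partial^\alpha f)\circ\varphi_0\big)\,h^\alpha ,
\]
where each $h\mapsto P_k(h)$ is a homogeneous polynomial of degree $k$ with values in $L(H^\infty_\Xi;H^s)$. Applying the Banach algebra estimate together with the boundedness bound from the warm-up step to the band-limited function $\partial^\alpha f$ (so that $\|(\partial^\alpha f)\circ\varphi_0\|_{H^s}\le C_s\,e^{\,nC_a\|w_0\|_{H^s}}\|f\|_{L^2}$) yields
\[
 \|P_k\|_{L(H^\infty_\Xi;H^s)}\le C_s\,e^{\,nC_a\|w_0\|_{H^s}}\,\frac{(nC_a)^k}{k!},
\]
so the power series $\sum_k P_k$ has infinite radius of convergence and represents $\varphi\mapsto L_\varphi$ near $\varphi_0$. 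This is precisely real analyticity in the Banach-space sense.

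The main obstacle is not the estimates, which are routine once the two structural facts are in place, but the justification that the formal Taylor series actually converges in $H^s$ to the genuine composition. I would argue that the partial sums form a Cauchy sequence in $H^s$ by the absolute bounds above; since $s>n/2+1$ gives the imbedding $H^s\hookrightarrow C^0$, their $H^s$-limit coincides with their pointwise limit, which for each fixed $x$ equals $f(\varphi(x))$ because $f$ is entire with infinite radius of convergence. Hence the $H^s$-limit is $f\circ\varphi$, legitimizing every interchange above. The remaining care is purely combinatorial bookkeeping with multi-indices while keeping the constants $C_s$ and $C_a$ uniform.
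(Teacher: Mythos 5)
Your proof is correct, and it is essentially the intended argument: the paper itself omits the proof of Lemma \ref{lemma_composition1} (deferring to Inci \cite{thesis}), where the same strategy is used --- Taylor-expand the entire function $f$ around $\varphi_0(x)$, exploit the uniform bound $\|\partial^\alpha f\|_{L^2}\le\|f\|_{L^2}$ valid on the band-limited space $H^\infty_\Xi$ together with \eqref{chi_smoothing} and the Banach algebra property of $H^s$, and observe that the resulting power series in $h=\varphi-\varphi_0$ converges normally in $L\big(H^\infty_\Xi(\R^n);H^s(\R^n)\big)$ on every ball. Your justification that the $H^s$-sum equals the genuine composition (via $H^s\hookrightarrow C^0$ and pointwise convergence of the Taylor series of an entire function) closes the only delicate point.
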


Recall that the differential structure of $\Ds^s(\R^n)$ is given by identifying it with the open set $\Ds^s(\R^n) - \mbox{id} \subseteq H^s(\R^n;\R^n)$.

\begin{proof}
 Let $f \in H_\Xi^\infty(\R^n)$. We know that $f$ is an entire function and hence admits a power series expansion for any $x,y \in \R^n$
\[
 f(x+y)=\sum_{|\alpha| \geq 0} \frac{1}{\alpha !} \partial^\alpha f(x) y^\alpha
\]
where we use the multi-index notation, i.e. for a multi-index $\alpha=(\alpha_1,\ldots,\alpha_n) \in \mathbb Z^n_{\geq 0}$, 
\[
 \partial^\alpha f(x) = \partial_1^{\alpha_1} \cdots \partial_n^{\alpha_n} f(x),\quad \alpha!=\alpha_1! \cdots \alpha_n !
\]
and for $y=(y_1,\ldots,y_n) \in \R^n$, $y^\alpha= y_1^{\alpha_1} \cdots y_n^{\alpha_n}$. For the derivative $\partial^\alpha f$ we have from \eqref{chi_smoothing}
\[ 
 ||\partial^\alpha f||_s \leq ||f||_{s+|\alpha|} \leq 2^{(s+|\alpha|)/2} ||f||_{L^2}.
\]
Writing $\varphi = \operatorname{id} + g$, $g=(g_1,\ldots,g_n) \in H^s(\R^n;\R^n)$ we have with the notation $g^\alpha(x)=g_1^{\alpha_1}(x) \cdots g_n^{\alpha_n}(x)$, pointwise for all $x \in \R^n$
\[
 f\big(\varphi(x)\big) = f\big(x+g(x)\big) = \sum_{|\alpha| \geq 0} \frac{1}{\alpha!} \partial^\alpha f(x) g^\alpha(x)
\]
or formally as an identity in $\mathcal L\big(H_\Xi^\infty(\R^n),H^s(\R^n)\big)$
\begin{equation}\label{power_series}
 f \mapsto f \circ \varphi \equiv f \mapsto \sum_{k \geq 0} Q_k(g)(f)
\end{equation}
where $Q_k(g)$ is a linear differential operator of order $k$ whose coefficients are homogeneous polynomials in the components of $g \in H^s(\R^n;\R^n)$, $Q_k(g)=\sum_{|\alpha|=k} \frac{1}{\alpha!} g^\alpha \partial^\alpha$, acting on functions $f \in H_\Xi^\infty(\R^n)$ as
\[
 Q_k(g)(f) = \sum_{|\alpha|=k} \frac{1}{\alpha!} g^\alpha \partial^\alpha f.
\]
Note that $Q_k(g):H_\Xi^\infty(\R^n) \to H^s(\R^n)$ is a bounded linear map. Indeed we have by the Banach algebra property of $H^s(\R^n)$ for any multi-index $\alpha$ with $|\alpha|=k$
\[
 ||g^\alpha \partial^\alpha f||_s \leq C^k ||g||_s^k ||\partial^\alpha f||_s \leq C^k ||g||_s^k ||f||_{s+k}
\]
and hence 
\[
 ||Q_k(g)(f)||_s \leq \left(\sum_{|\alpha|=k} \frac{1}{\alpha!}\right) C^k 2^{(s+k)/2} ||f||_{L^2} ||g||_s^k  = \frac{1}{k!} n^k C^k 2^{(2+k)/2} ||f||_{L^2} ||g||_s^k.
\]
Here we used that by the multinomial theorem,
\[
 \sum_{|\alpha|=k} \frac{k!}{\alpha!} = (1+\ldots+1)^k = n^k.
\]
Recall the norm of the operator $Q_k$ -- see Appendix \ref{appendix_analyticity} \eqref{radius_condition}
\[
 ||Q_k|| = \sup_{\mbox{\scriptsize $\begin{array}{c} ||g||_s \leq 1 \\ f \in H_\Xi^\infty(\R^n)\\ ||f||_{L^2} \leq 1 \end{array}$}} ||Q_k(g)(f)||_s.
\]
Altogether we have proved that $||Q_k|| \leq \frac{1}{k!} n^k C^k 2^{(2+k)/2}$ leading to 
\[
 \sup_{k \geq 0} ||Q_k|| r^k < \infty
\]
for all $r > 0$. Therefore the series \eqref{power_series} has convergence radius $R=\infty$. Now the pointwise limit $f \circ \varphi$ and the $H^s$-limit $\sum_{k \geq 0} Q_k(g)(f)$ must coincide. Thus we see that
\[
 \Phi:\Ds^s(\R^n) \to \mathcal L\big(H_\Xi^\infty(\R^n),H^s(\R^n)\big),\quad \varphi \mapsto \big[f \mapsto f \circ \varphi = \left(\sum_{k \geq 0} Q_k(g)\right) (f)\big]
\]
is real analytic. Again we identify here $\varphi$ with $g=\varphi - \operatorname{id}$. 

\end{proof}

\begin{Lemma}\label{lemma_composition2}
Let $n \geq 2$, $s > n/2+1$ and $0 \leq s' \leq s$. Then
\[
 \Ds^s(\R^n) \to L\big(H^{s'}(\R^n);H^\infty_\Xi(\R^n)\big),\quad \varphi \mapsto [f \mapsto \chi(D)(f \circ \varphi^{-1})]
\]
is real analytic.
\end{Lemma}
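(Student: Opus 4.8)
The obstruction to a naive argument is that the inversion $\varphi \mapsto \varphi^{-1}$ on $\Ds^s(\R^n)$ is merely continuous and fails to be differentiable, so one cannot present the operator as an analytic inversion followed by the (already delicate) composition map. The plan is to remove $\varphi^{-1}$ entirely by a change of variables on the Fourier side. For $f$ in a dense subclass (say $C_c^\infty$) and $g = f\circ\varphi^{-1}$, the substitution $x=\varphi(y)$ together with $\det d\varphi > 0$ gives
\[
 \mathcal F[f\circ\varphi^{-1}](\xi) = \frac{1}{(2\pi)^{n/2}} \int_{\R^n} e^{-i\varphi(y)\cdot\xi}\, f(y)\, \det d\varphi(y)\; dy ,
\]
so that
\[
 \chi(D)(f\circ\varphi^{-1}) = \mathcal F^{-1}\Big[\chi(\xi)\,\frac{1}{(2\pi)^{n/2}}\int_{\R^n} e^{-i\varphi(y)\cdot\xi}\, f(y)\,\det d\varphi(y)\; dy\Big],
\]
an identity that extends to all of $H^{s'}(\R^n)$ by density. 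The factor $\chi(\xi)$ confines the Fourier support to the unit ball $\Xi$, so the image lies in $H^\infty_\Xi(\R^n)$; moreover every $H^{s_1}$-norm on $H^\infty_\Xi$ is comparable to the $L^2$-norm by \eqref{chi_smoothing}, so it suffices to bound the $L^2(\xi)$-norm of the bracketed function over $\Xi$, hence $\sup_{|\xi|\le1}$ of the inner integral. Decisively, $\varphi^{-1}$ has vanished: the operator depends on $\varphi$ only through the analytic objects $e^{-i\varphi(y)\cdot\xi}$ and $\det d\varphi(y)$.

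Next I would fix a base point $\varphi_0\in\Ds^s(\R^n)$ and write $\varphi=\varphi_0+h$ with $h\in H^s(\R^n;\R^n)$ small. Splitting $e^{-i\varphi(y)\cdot\xi}=e^{-i\varphi_0(y)\cdot\xi}\,e^{-ih(y)\cdot\xi}$, I expand
\[
 e^{-ih(y)\cdot\xi}=\sum_{m\ge0}\frac{(-i)^m}{m!}\,(h(y)\cdot\xi)^m ,
\]
while $\det d\varphi(y)=\det\!\big(d\varphi_0(y)+dh(y)\big)=\sum_{\ell=0}^n P_\ell(y)[dh(y)]$ is a polynomial of degree at most $n$ in $dh(y)$ with coefficients built from $d\varphi_0$. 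Multiplying the two expansions and grouping the terms homogeneous of degree $k=m+\ell$ in $h$ exhibits the operator $A_\varphi:=[f\mapsto\chi(D)(f\circ\varphi^{-1})]$ as a formal power series $A_{\varphi_0+h}=\sum_{k\ge0}Q_k(h)$, where each $Q_k$ is the homogeneous polynomial attached to a bounded symmetric $k$-linear map valued in $L\big(H^{s'}(\R^n);H^\infty_\Xi(\R^n)\big)$.

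It remains to estimate $\|Q_k(h)\|$ and sum. Since $|h(y)\cdot\xi|\le|h(y)|$ for $|\xi|\le1$, the inner integral of the $(m,\ell)$-term is bounded, using Cauchy--Schwarz, by
\[
 \frac{1}{(2\pi)^{n/2}m!}\int_{\R^n}|h(y)|^m\,\big|P_\ell(y)[dh(y)]\big|\,|f(y)|\;dy \le \frac{C(\varphi_0)}{m!}\,\big\|\,|h|^m\,|dh|^\ell\,\big\|_{L^2}\,\|f\|_{L^2}.
\]
Because $s>n/2$ and $s-1>n/2$, Sobolev embedding and the multiplication property \eqref{multiplication} give $\big\|\,|h|^m\,|dh|^\ell\,\big\|_{L^2}\le C(\varphi_0)\,\|h\|_{H^s}^{m}\,\|h\|_{H^{s-1}}^{\ell}\le C(\varphi_0)\,\|h\|_{H^s}^{k}$ (placing all but one factor in $L^\infty$). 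With $\|f\|_{L^2}\le\|f\|_{H^{s'}}$ and the norm comparison on $H^\infty_\Xi$, this yields $\|Q_k\|\le C(\varphi_0)\sum_{m+\ell=k,\,0\le\ell\le n}\tfrac{1}{m!}$, so $\sum_k\|Q_k\|\,\rho^k<\infty$ for every $\rho>0$, the index $\ell$ ranging over a fixed finite set while $1/m!$ decays super-geometrically. Hence the series converges in operator norm on a ball about $\varphi_0$, which is exactly the real analyticity of $\varphi\mapsto A_\varphi$ asserted in the lemma.

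The \textbf{main obstacle} is the opening step: seeing that the change of variables rewrites $\chi(D)(f\circ\varphi^{-1})$ without any occurrence of $\varphi^{-1}$, thereby circumventing the non-differentiability of inversion on $\Ds^s(\R^n)$. Once that representation is available, the cut-off $\chi$ restricts $\xi$ to the unit ball and renders the subsequent analysis routine: the exponential series has infinite radius, all Fourier multipliers are bounded, and membership in $H^\infty_\Xi$ is automatic. The remaining work—checking that the collected terms are genuine bounded symmetric multilinear maps and that the determinant contributes only finitely many degrees—requires care but no new idea.
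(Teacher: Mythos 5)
The paper itself does not prove this lemma --- it defers to the thesis \cite{thesis} --- and your change-of-variables representation $\mathcal F[f\circ\varphi^{-1}](\xi)=\tfrac{1}{(2\pi)^{n/2}}\int_{\R^n} e^{-i\varphi(y)\cdot\xi}f(y)\det d\varphi(y)\,dy$, followed by the Taylor expansion of $e^{-ih(y)\cdot\xi}$ and of $\det(d\varphi_0+dh)$ around a base point, is exactly the mechanism used there, so your proposal is correct and is essentially the argument on record. The only slip is the claim that $\sum_k\|Q_k\|\rho^k<\infty$ for \emph{every} $\rho>0$: the Sobolev-embedding and multiplication constants compound as $C^k$ in your bound on $\big\||h|^m|dh|^\ell\big\|_{L^2}$, so the radius of convergence is finite (of order $1/C$), which is still all that real analyticity on a neighborhood of $\varphi_0$ requires.
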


\begin{proof}
 First we consider
\begin{equation}\label{joint_composition}
 H^{s'}(\R^n) \times \Ds^s(\R^n) \to H_\Xi^\infty(\R^n),\quad (f,\varphi) \mapsto \chi(D)(f \circ \varphi^{-1})
\end{equation}
and show that it is weakly analytic in the sense that
\begin{equation}\label{weakly_analytic}
(f,\varphi) \mapsto \langle \chi(D) (f \circ \varphi^{-1}),g \rangle_{L^2}
\end{equation}
is real analytic for any fixed $g \in H_{\Xi}^\infty(\R^n)$. So choose $g \in H_\Xi^\infty(\R^n)$. Note that $\chi(D) g=g$. As
\[
 \langle \chi(D)(f\circ \varphi^{-1}),g \rangle_{L^2} = \langle f \circ \varphi^{-1}, \chi(D) g \rangle_{L^2}
\]
it then follows after a change of variable of integration $y=\varphi^{-1}(x)$ that
\begin{equation}\label{weak_integral}
 \int_{\R^n} f\circ \varphi^{-1} \cdot g \,dx = \int_{\R^n}f \cdot g \circ \varphi \cdot \det(d_y \varphi)\, dy.
\end{equation}
By Lemma \ref{lemma_composition1} it follows that
\[
 \Ds^s(\R^n) \to H^{s'}(\R^n),\quad \varphi \mapsto g \circ \varphi
\]
is real analytic with convergence radius $R=\infty$. In addition $\Ds^s(\R^n) \to H^{s-1}(\R^n)$, $\varphi \mapsto \det(d_x \varphi) -1$ is also real analytic with radius of convergence $R=\infty$, since it is a polynomial. Altogether one then concludes that the expression on the right-hand side of \eqref{weak_integral} is real analytic in $(f,\varphi)$ with radius of convergence $R=\infty$. As $g$ was arbitrary, we conclude from Proposition \ref{prop_weak_analytic} that \eqref{joint_composition} is real analytic with radius of convergence $R=\infty$. As the map \eqref{joint_composition} is linear in $f$ the claim of the Lemma holds -- for details see Appendix \ref{appendix_analyticity}
\end{proof}

We split the proof of Proposition \ref{prop_analytic_gamma} according to $B=B_1+B_2$ into two lemmas. In the sequel we will use the notation $R_\varphi$ for the right-composition, i.e. $R_\varphi f:=f \circ \varphi$. Note that $R_\varphi^{-1}=R_{\varphi^{-1}}$.

\begin{Lemma}\label{lemma_b1}
Let $n \geq 2$ and $s > n/2+1$. Then
\begin{eqnarray*}
 \Ds^s(\R^n) &\to& L^2\big(H^s(\R^n;\R^n);H^s(\R^n;\R^n)\big)\\
\varphi &\mapsto& [(v,w) \mapsto \nabla B_1(v \circ \varphi^{-1},w \circ \varphi^{-1}) \circ \varphi] 
\end{eqnarray*}
is real analytic.
\end{Lemma}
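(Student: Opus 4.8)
The plan is to quarantine the two ill-behaved composition operations — the inner $R_{\varphi^{-1}}$ and the outer $R_\varphi$ — into a single conjugated Fourier multiplier, and to show that every other ingredient depends analytically on $\varphi$ through the Banach-algebra structure of $H^{s-1}(\R^n)$ (recall $s-1>n/2$). Write $T:=\nabla\Delta^{-1}\big(1-\chi(D)\big)$, a continuous linear map $H^{s-1}(\R^n)\to H^s(\R^n;\R^n)$, and $Q(a,b):=\sum_{j,k=1}^n\partial_j a_k\,\partial_k b_j$, so that the map in the statement is $\Gamma^{(1)}_\varphi(v,w):=R_\varphi\,T\,Q\big(R_{\varphi^{-1}}v,R_{\varphi^{-1}}w\big)$. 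First I would use the chain rule together with $d(\varphi^{-1})=\big((d\varphi)^{-1}\big)\circ\varphi^{-1}$ to pull the composition out of the quadratic form: putting $G:=(d\varphi)^{-1}$ one gets $Q\big(R_{\varphi^{-1}}v,R_{\varphi^{-1}}w\big)=\big(\tilde F_\varphi(v,w)\big)\circ\varphi^{-1}$ with the scalar, composition-free expression $\tilde F_\varphi(v,w):=\sum_{j,k,l,m}\partial_l v_k\,\partial_m w_j\,G_{lj}\,G_{mk}$. I will not carry out this (routine) index bookkeeping here. The outcome is the factorization
\[
 \Gamma^{(1)}_\varphi(v,w)=\big(R_\varphi\,T\,R_{\varphi^{-1}}\big)\,\tilde F_\varphi(v,w).
\]

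Next I would dispatch the easy factor. The map $\varphi\mapsto\tilde F_\varphi\in L^2\big(H^s(\R^n;\R^n);H^{s-1}(\R^n)\big)$ is real analytic: the operators $v\mapsto\partial_l v_k$ are bounded linear $H^s\to H^{s-1}$; matrix inversion is real analytic on $\operatorname{GL}(n)$, and since $\varphi\mapsto d\varphi-\operatorname{id}$ is bounded linear into the matrices over $H^{s-1}(\R^n)$, the map $\varphi\mapsto G=(d\varphi)^{-1}$ is analytic into $\operatorname{id}+H^{s-1}$; and the products are controlled by the Banach-algebra multiplication \eqref{multiplication}. Therefore, once I know that
\[
 \Ds^s(\R^n)\to L\big(H^{s-1}(\R^n);H^s(\R^n;\R^n)\big),\quad\varphi\mapsto R_\varphi\,T\,R_{\varphi^{-1}}
\]
is real analytic, the claim follows by composing these two analytic factors via the continuous bilinear operator composition $L\big(H^{s-1};H^s(\R^n;\R^n)\big)\times L^2\big(H^s(\R^n;\R^n);H^{s-1}\big)\to L^2\big(H^s(\R^n;\R^n);H^s(\R^n;\R^n)\big)$.

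Thus everything reduces to the analytic dependence on $\varphi$ of the conjugated multiplier $R_\varphi\,T\,R_{\varphi^{-1}}$, and this is the step I expect to be the main obstacle. It cannot be handled as in the $B_2$-case by routing through $H^\infty_\Xi(\R^n)$ and invoking Lemma \ref{lemma_composition1} and Lemma \ref{lemma_composition2}: since $(1-\chi)\chi\equiv0$ one has $T\,\chi(D)=0$, so the range of $T$ consists of high-frequency functions and composing them with $\varphi$ on the output is precisely the badly behaved operation. Instead I would pass to the convolution kernel $k:=\mathcal F^{-1}[m]$, $m(\xi)=-i\,\xi\,|\xi|^{-2}\big(1-\chi(\xi)\big)$, and realize $R_\varphi\,T\,R_{\varphi^{-1}}$ as the integral operator with kernel $K_\varphi(x,z)=k\big(\varphi(x)-\varphi(z)\big)\,\det d\varphi(z)$. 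Writing $\varphi=\operatorname{id}+f$, $f\in H^s(\R^n;\R^n)$, and expanding $K_\varphi$ into a power series in $f$, the task is to prove that this series converges in the operator norm of $L\big(H^{s-1};H^s(\R^n;\R^n)\big)$, locally uniformly in $\varphi$.

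The delicate point is that $k$ is a Calder\'on--Zygmund kernel of order $-1$, so one must exploit the one-derivative smoothing of $T$ both to absorb its diagonal singularity and to recover, after conjugation by the $H^s$-diffeomorphism, the full gain from $H^{s-1}$ to $H^s$; the multilinear terms of the expansion are then estimated by commutator/Coifman--Meyer-type bounds, the geometric decay of which yields analyticity. An alternative that makes the $\varphi$-dependence transparent is to represent the smoothing multiplier through the heat semigroup, $-\Delta^{-1}\big(1-\chi(D)\big)=\int_0^\infty e^{t\Delta}\big(1-\chi(D)\big)\,dt$: each conjugated factor $R_\varphi\,e^{t\Delta}\,R_{\varphi^{-1}}$ then carries the entire Gaussian kernel $(4\pi t)^{-n/2}\,e^{-|\varphi(x)-\varphi(z)|^2/4t}\,\det d\varphi(z)$, whose dependence on $\varphi$ is manifestly analytic, the large-$t$ integrability being furnished by $1-\chi(D)$ and the only real work being the uniform small-$t$ estimate.
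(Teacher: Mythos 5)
Your reduction is sound and, up to bookkeeping, coincides with the paper's: the chain-rule computation turning $Q(R_{\varphi^{-1}}v,R_{\varphi^{-1}}w)$ into $\tilde F_\varphi(v,w)\circ\varphi^{-1}$ with $G=(d\varphi)^{-1}$ is exactly the paper's formula $R_\varphi\partial_k R_\varphi^{-1}f=\sum_j\partial_j f\,C_{jk}$, and the analyticity of $\varphi\mapsto\tilde F_\varphi$ via the Banach algebra $H^{s-1}(\R^n)$ and the analyticity of matrix inversion (division by $\det d\varphi$) is handled there in the same way. You have also correctly located the crux, namely the analyticity of $\varphi\mapsto R_\varphi\,\Delta^{-1}\big(1-\chi(D)\big)\,R_{\varphi^{-1}}$, and correctly observed that it cannot be routed through $H^\infty_\Xi(\R^n)$ because $(1-\chi)\chi\equiv 0$.

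At that crux, however, your proposal stops being a proof. Both of your suggested devices --- the power-series expansion of the kernel $k\big(\varphi(x)-\varphi(z)\big)\det d\varphi(z)$ with Coifman--Meyer-type bounds on the multilinear terms, and the conjugated heat semigroup with a uniform small-$t$ estimate --- are programmes rather than arguments: the geometric decay of the multilinear estimates and the uniformity in $t$ are precisely where all the work lies, and nothing is done to establish them. The paper bypasses this entirely with the single algebraic identity \eqref{convenient}: setting $A:=\chi(D)+\Delta\big(1-\chi(D)\big)$ one has $\Delta^{-1}\big(1-\chi(D)\big)=A^{-1}-\chi(D)$. Conjugation commutes with inversion, so $R_\varphi A^{-1}R_\varphi^{-1}=\big(R_\varphi A R_\varphi^{-1}\big)^{-1}$, and $R_\varphi A R_\varphi^{-1}$ is easy to control: $\Delta$ is a local differential operator, so its conjugate is $\sum_k\big(R_\varphi\partial_k R_\varphi^{-1}\big)^2$, analytic by the same Banach-algebra argument you already used for $\tilde F_\varphi$, while $R_\varphi\chi(D)R_\varphi^{-1}$ is analytic by Lemmas \ref{lemma_composition1} and \ref{lemma_composition2}. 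The analyticity of $\operatorname{inv}:GL(X;Y)\to GL(Y;X)$ (Neumann series) then yields the analyticity of $\varphi\mapsto R_\varphi A^{-1}R_\varphi^{-1}$ in $L\big(H^{s-2}(\R^n);H^s(\R^n)\big)$, hence of the conjugated smoothing operator. This observation is the missing idea; without it, or a genuinely worked-out substitute for the kernel estimates you invoke, your argument is incomplete at its central step.
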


\begin{proof}[Proof of Lemma \ref{lemma_b1}]
Recall that $\nabla B_1(v,w)$ is given by
\[
 \nabla B_1(v,w)= \nabla \left( \Delta^{-1} \big(1- \chi(D)\big) \sum_{j,k=1}^n \partial_j v_k \partial_k w_j \right).
\]
It will be convenient to write $\Delta^{-1} \big(1-\chi(D)\big)$ as
\begin{equation}\label{convenient}
 \Delta^{-1} \big(1-\chi(D)\big) = \left(\chi(D) + \Delta \big(1-\chi(D)\big)\right)^{-1} - \chi(D).
\end{equation}
In a first step we will prove that for $A:=\chi(D) + \Delta \big(1-\chi(D)\big)$
\[
\Ds^s(\R^n) \to L\big(H^s(\R^n);H^{s-2}(\R^n)\big),\quad \varphi \mapsto [f \mapsto R_\varphi  A R_\varphi^{-1} f]
\]
is real analytic. From Lemma \ref{lemma_composition1} and \ref{lemma_composition2} we know that
\[
 \Ds^s(\R^n) \to L\big(H^s(\R^n);H^s(\R^n)\big),\quad \varphi \mapsto [f \mapsto R_\varphi \chi(D) R_\varphi^{-1} f]
\]
is real analytic. The same is of course true if we replace above $\chi(D)$ by $1-\chi(D)$. To proceed we prove that for any $1 \leq s' \leq s$ and $1 \leq k \leq n$
\begin{equation}\label{conjugation}
 \Ds^s(\R^n) \to L\big(H^{s'}(\R^n);H^{s'-1}(\R^n)\big),\quad \varphi \mapsto [f \mapsto R_\varphi \partial_k R_\varphi^{-1} f]
\end{equation}
is real analytic. We clearly have
\[
 R_\varphi \partial_k R_\varphi^{-1} f = \sum_{j=1}^n \partial_j f C_{jk}  
\]
where $(C_{jk})_{1 \leq j,k \leq n} = [d\varphi]^{-1}$, i.e the inverse matrix of the jacobian of $\varphi$. Note that the entries of $(C_{jk})_{1 \leq j,k \leq n}$ are polynomial expressions of the entries of $[d\varphi]$ divided by $\det(d\varphi)$. As $H^{s-1}$ is a Banach algebra and division by $\det(d\varphi)$ an analytic operation -- see Inci \cite{thesis} -- we get by \eqref{multiplication} that $\varphi \mapsto R_\varphi \partial_k R_\varphi^{-1} $ is real analytic as claimed. Writing
\[
 R_\varphi \Delta R_\varphi^{-1} = \sum_{k=1}^n R_\varphi \partial_k R_\varphi^{-1} R_\varphi \partial_k R_\varphi^{-1} 
\]
we thus see that $\varphi \mapsto R_\varphi \Delta R_\varphi^{-1}$ is also real analytic. Finally writing 
\begin{equation}\label{analytic_A}
 R_\varphi A R_\varphi^{-1} = R_\varphi \chi(D) R_\varphi^{-1} + R_\varphi \Delta R_\varphi^{-1} R_\varphi \big(1-\chi(D)\big) R_\varphi^{-1}
\end{equation}
we get that $\varphi \mapsto R_\varphi A R_\varphi^{-1}$ is real analytic. Denoting by $X,Y$ real Banach spaces and by $GL(X;Y) \subseteq L(X;Y)$ the open subset of invertible continuous linear operators from $X$ to $Y$ we know by the Neumann series -- see Dieudonn\'e \cite{dieudonne} -- that
\[
 \operatorname{inv}:GL(X;Y) \to GL(Y;X),\quad T \mapsto T^{-1}
\]
is real analytic. Therefore we get from the analyticity of \eqref{analytic_A} that
\[
 \Ds^s(\R^n) \to L\big(H^{s-2}(\R^n);H^s(\R^n)\big),\quad \varphi \mapsto R_\varphi A^{-1} R_\varphi^{-1} = \left(R_\varphi A R_\varphi^{-1}\right)^{-1}
\]
is real analytic. This implies by \eqref{convenient} that
\[
 \Ds^s(\R^n) \to L\big(H^{s-2}(\R^n);H^s(\R^n)\big),\quad \varphi \mapsto R_\varphi \Delta^{-1} \big(1-\chi(D)\big) R_\varphi^{-1} 
\]
is real analytic. By letting $\Delta^{-1}\big(1-\chi(D)\big)$ act componentwise we write
\begin{multline*}
 \nabla B_1(v \circ \varphi^{-1},w \circ \varphi^{-1}) \circ \varphi = \\
\big(R_\varphi \Delta^{-1} \big(1-\chi(D)\big) R_\varphi^{-1}\big) \big( R_\varphi \nabla R_\varphi^{-1} \big) \sum_{j,k=1}^n \big(R_\varphi \partial_j R_\varphi^{-1} v_k \big) \big(R_\varphi \partial_k R_\varphi^{-1} w_j\big) 
\end{multline*}
and we get from the considerations above that
\begin{eqnarray*}
 \Ds^s(\R^n) &\to& L^2\big(H^s(\R^n;\R^n);H^s(\R^n;\R^n)\big)\\
\varphi &\mapsto& [(v,w) \mapsto \nabla B_1(v \circ \varphi^{-1},w \circ \varphi^{-1}) \circ \varphi]
\end{eqnarray*}
is real analytic.
\end{proof}

\begin{Lemma}\label{lemma_b2}
Let $n \geq 2$ and $s > n/2+1$. Then
\begin{eqnarray*}
 \Ds^s(\R^n) &\to& L^2\big(H^s(\R^n;\R^n);H^s(\R^n;\R^n)\big)\\
\varphi &\mapsto& [(v,w) \mapsto \nabla B_2(v \circ \varphi^{-1},w \circ \varphi^{-1}) \circ \varphi] 
\end{eqnarray*}
is real analytic.
\end{Lemma}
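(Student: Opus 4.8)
The plan is to exploit the cut-off $\chi(D)$ present in $B_2$, which lets me route the whole construction through the space $H^\infty_\Xi(\R^n)$ and apply Lemma \ref{lemma_composition1} and Lemma \ref{lemma_composition2} directly, so that (in contrast to Lemma \ref{lemma_b1}) no operator inversion is needed. First I would rewrite the object to be studied. Since $(v_j\circ\varphi^{-1})(w_k\circ\varphi^{-1})=(v_jw_k)\circ\varphi^{-1}$, the definition of $B_2$ gives
\[
 \nabla B_2(v\circ\varphi^{-1},w\circ\varphi^{-1})\circ\varphi = \sum_{j,k=1}^n R_\varphi\, \nabla\Delta^{-1}\chi(D)\partial_j\partial_k\big((v_jw_k)\circ\varphi^{-1}\big).
\]
By the Banach algebra property of $H^s(\R^n)$ the map $(v,w)\mapsto v_jw_k$ is a continuous bilinear map $H^s(\R^n;\R^n)\times H^s(\R^n;\R^n)\to H^s(\R^n)$ which does not depend on $\varphi$, so it suffices to show that for each fixed $1\leq j,k\leq n$ the map $\varphi\mapsto[h\mapsto R_\varphi\,\nabla\Delta^{-1}\chi(D)\partial_j\partial_k(h\circ\varphi^{-1})]$ is real analytic $\Ds^s(\R^n)\to L\big(H^s(\R^n);H^s(\R^n;\R^n)\big)$; summing over $j,k$ and precomposing with the bilinear maps above then yields the claim in $L^2$.

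The key algebraic step is to commute the cut-off so that it acts directly on $h\circ\varphi^{-1}$. Writing $P_{jk}:=\Delta^{-1}\partial_j\partial_k$ for the Fourier multiplier with bounded symbol $\xi_j\xi_k/|\xi|^2$, the operators involved are all Fourier multipliers and hence commute, and the combined symbol $\frac{\xi_j\xi_k}{|\xi|^2}\chi(\xi)$ is bounded; therefore $\nabla\Delta^{-1}\chi(D)\partial_j\partial_k = \nabla P_{jk}\circ\chi(D)$. The crucial observation is that $\nabla P_{jk}$ is a \emph{fixed} bounded operator $H^\infty_\Xi(\R^n)\to H^\infty_\Xi(\R^n;\R^n)$: on functions with Fourier support in the closed unit ball $\Xi$ the symbol $i\xi\,\xi_j\xi_k/|\xi|^2$ of $\nabla P_{jk}$ is bounded (by $|\xi|\leq1$) and preserves the support in $\Xi$, so despite the derivative and the singular factor $1/|\xi|^2$ the operator is continuous on $H^\infty_\Xi$ with its $L^2$-topology.

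With this factorization the target map is a composition of three maps. By Lemma \ref{lemma_composition2} (with $s'=s$) the map $\varphi\mapsto[h\mapsto\chi(D)(h\circ\varphi^{-1})]$ is real analytic $\Ds^s(\R^n)\to L\big(H^s(\R^n);H^\infty_\Xi(\R^n)\big)$; the middle factor $\nabla P_{jk}$ is a fixed element of $L\big(H^\infty_\Xi(\R^n);H^\infty_\Xi(\R^n;\R^n)\big)$; and by Lemma \ref{lemma_composition1} the map $\varphi\mapsto R_\varphi=[f\mapsto f\circ\varphi]$ is real analytic $\Ds^s(\R^n)\to L\big(H^\infty_\Xi(\R^n);H^s(\R^n)\big)$ (applied componentwise). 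Since operator composition $L(Y;Z)\times L(X;Y)\to L(X;Z)$ is a continuous bilinear, hence real analytic, map, composing these three real analytic (resp.\ constant) operator-valued maps shows that $\varphi\mapsto R_\varphi\,\nabla P_{jk}\,\chi(D)R_\varphi^{-1}$ is real analytic into $L\big(H^s(\R^n);H^s(\R^n;\R^n)\big)$. Precomposing with the fixed bilinear maps $(v,w)\mapsto v_jw_k$ and summing the finitely many terms over $j,k$ gives the asserted real analyticity into $L^2\big(H^s(\R^n;\R^n);H^s(\R^n;\R^n)\big)$.

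I expect the only genuine obstacle to be the second step: correctly isolating $\chi(D)$ so that Lemma \ref{lemma_composition2} applies, and checking that the remaining fixed operator $\nabla P_{jk}$ is really bounded on the band-limited space $H^\infty_\Xi$. Once the singular multiplier $\Delta^{-1}\partial_j\partial_k$ and the extra derivative $\nabla$ are seen to be harmless on functions supported in $\Xi$, the argument reduces to routine compositions of maps already shown to be analytic, and unlike Lemma \ref{lemma_b1} it requires no Neumann-series inversion.
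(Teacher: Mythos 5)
Your proposal is correct and follows essentially the same route as the paper: rewrite the term as $\sum_{j,k} R_\varphi\,\nabla\Delta^{-1}\partial_j\partial_k\,\chi(D)R_\varphi^{-1}(v_jw_k)$, use the Banach algebra property for $(v,w)\mapsto v_jw_k$, apply Lemma \ref{lemma_composition2} to get $\chi(D)R_\varphi^{-1}$ analytic into $L\big(H^s(\R^n);H^\infty_\Xi(\R^n)\big)$, observe that $\nabla\Delta^{-1}\partial_j\partial_k$ is a fixed bounded operator on $H^\infty_\Xi$, and finish with Lemma \ref{lemma_composition1}. The only difference is that you spell out the boundedness of the multiplier $i\xi\,\xi_j\xi_k/|\xi|^2$ on the band-limited space, which the paper merely asserts.
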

\begin{proof}[Proof of Lemma \ref{lemma_b2}]
We write
\begin{equation}\label{b2_expression}
 \nabla B_2(v \circ \varphi^{-1},w \circ \varphi^{-1}) \circ \varphi = \sum_{j,k=1}^n R_\varphi \nabla  \Delta^{-1} \partial_j \partial_k \chi(D) R_\varphi^{-1}(v_j w_k). 
\end{equation}
By Lemma \ref{lemma_composition2} we know that $\varphi \mapsto \chi(D) R_\varphi^{-1}$ is real analytic with values in $L\big(H^s(\R^n);H^\infty_\Xi(\R^n)\big)$. Moreover for any $1 \leq j,k \leq n$
\[
 \nabla \Delta^{-1} \partial_j \partial_k:H^\infty_\Xi(\R^n) \to H^\infty_\Xi(\R^n;\R^n)
\]
is a continuous linear map. By Lemma \ref{lemma_composition1} we then see that the expression \eqref{b2_expression} is real analytic in $\varphi$ showing the claim. 
\end{proof}

\begin{proof}[Proof of Proposition \ref{prop_analytic_gamma}]
As $B=B_1+B_2$ the proof follows from Lemma \ref{lemma_b1} and Lemma \ref{lemma_b2}.
\end{proof}

Now we can prove the main theorem.

\begin{proof}[Proof of Theorem \ref{th_main}]
The analyticity statement for $\Gamma$ follows from Proposition \ref{prop_analytic_gamma}. To prove the first part of the second statement consider $\varphi \in C^2\big([0,T];\Ds^s(\R^n)\big)$, $T > 0$, solving
\begin{equation}\label{geodesic_eq2}
 \partial_t^2 \varphi= \Gamma_\varphi(\partial_t \varphi,\partial_t \varphi),\quad \varphi(0)=\operatorname{id},\partial_t \varphi(0)=u_0 \in H^s(\R^n;\R^n).
\end{equation}
We define $u:=\partial_t \varphi \circ \varphi^{-1}$. By the continuity of the group operations in $\Ds^s(\R^n)$ and by \eqref{composition} we know that $u \in C^0\big([0,T];H^s(\R^n;\R^n)\big)$. By the Sobolev imbedding we have $\varphi,\partial_t \varphi \in C^1\big([0,T]\times \R^n;\R^n)$. By the inverse function theorem we also have $\varphi^{-1} \in C^1\big([0,T] \times \R^n;\R^n)$. Hence $u \in C^1\big([0,T]\times \R^n;\R^n)$. Taking the pointwise $t$-derivative in the relation $\partial_t \varphi(t,x) = u(t,\varphi(t,x))$ leads to
\begin{equation}\label{t_derivative} 
\partial_t^2 \varphi = (\partial_t u + (u \cdot \nabla)u) \circ \varphi.
\end{equation}
Using the expression \eqref{def_gamma} corresponding to $\Gamma_\varphi(\partial_t \varphi,\partial_t \varphi)$ and using $u=\partial_t \varphi \circ \varphi^{-1}$ we get pointwise (for any $(t,x) \in [0,T] \times \R^n$ without writing the argument explicitly)
\[
 B(u,u) \circ \varphi = (\partial_t u + (u \cdot \nabla) u) \circ \varphi.
\]
Skipping the composition by $\varphi$ on both sides, we get by the fundamental lemma of calculus for any $(t,x) \in [0,T] \times \R^n$ (without writing the $x$-argument)
\begin{equation}\label{fundamental_lemma}
 u(t)=u_0 + \int_0^t B\big(u(\tau),u(\tau)\big) - \big(u(\tau) \cdot \nabla \big) u(\tau) \;d\tau. 
\end{equation}
The integrand in \eqref{fundamental_lemma} lies in $C^0\big([0,T];H^{s-1}(\R^n;\R^n)\big)$ so that \eqref{fundamental_lemma} is actually an identity in $H^{s-1}$, which shows that $u$ is a solution to the alternative formulation \eqref{alternative_E}.\\ Now it remains to prove the other direction. We take $u$ solving the alternative formulation \eqref{alternative_E}. We know that there is a unique $\varphi \in C^1\big([0,T];\Ds^s(\R^n)\big)$ solving
\[
 \partial_t \varphi = u \circ \varphi,\quad \varphi(0)=\operatorname{id}.
\]
The claim is that $\varphi$ solves the geodesic equation \eqref{geodesic_eq2}. First note that by the fact that $u$ is a solution to the alternative formulation \eqref{alternative_E} and by the Sobolev imbedding we have $u,\varphi \in C^1([0,T] \times \R^n;\R^n)$. Thus we also have $\partial_t \varphi \in C^1([0,T] \times \R^n;\R^n)$. Taking the $t$-derivative in $\partial_t \varphi=u \circ \varphi$ we get the same expression as in \eqref{t_derivative}. Using that $u$ is a solution to the alternative formulation \eqref{alternative_E} we get by the fundamental lemma of calculus pointwise for any $(t,x) \in [0,T] \times \R^n$ (dropping the $x$-argument)
\begin{eqnarray*}
 \partial_t \varphi(t) &=& u_0 + \int_0^t B\big(\partial_t \varphi(\tau) \circ \varphi(\tau)^{-1},\partial_t \varphi(\tau) \circ \varphi(\tau)^{-1}\big) \circ \varphi(\tau) \;d\tau\\
 &=& u_0 + \int_0^t \Gamma_{\varphi(\tau)}\big(\partial_t \varphi(\tau),\partial_t \varphi(\tau)\big) \;d\tau.
\end{eqnarray*}
But as the integrand is a continuous curve in $H^s(\R^n;\R^n)$ we see that $t \mapsto \varphi(t)$ solves the geodesic equation \eqref{geodesic_eq2}. This completes the proof.
\end{proof}

In view of the condition $\operatorname{div} u=0$, the state space of \eqref{E} in Lagrangian coordinates is actually $\Ds^s_\mu(\R^n) \subseteq \Ds^s(\R^n)$, the subgroup of volume-preserving diffeomorphisms, i.e.
\[
 \Ds^s_\mu(\R^n) := \big\{ \varphi \in \Ds^s(\R^n) \;\big| \; \det(d\varphi) \equiv 1 \big\}.
\]

One has -- see section \ref{section_submanifold} for the proof

\begin{Th}\label{th_submanifold}
Let $n \geq 2$ and $s > n/2+1$. Then $\Ds^s_\mu(\R^n)$ is a closed real analytic submanifold of $\D^s(\R^n)$.
\end{Th}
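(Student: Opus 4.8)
The plan is to present $\Ds^s_\mu(\R^n)$ locally as the zero set of a real analytic submersion and then to spread the local picture over all of $\Ds^s_\mu(\R^n)$ by right translation. First I would introduce the constraint map
\[
 \Phi:\Ds^s(\R^n)\to H^{s-1}(\R^n),\qquad \Phi(\varphi):=\det(d\varphi)-1 .
\]
Since $d\varphi=\operatorname{id}+d(\varphi-\operatorname{id})$ has entries in $H^{s-1}(\R^n)$ and $s-1>n/2$, the determinant is a polynomial in those entries; by the Banach algebra property of $H^{s-1}(\R^n)$ the constant term cancels and $\Phi$ is a (polynomial, hence real analytic) map into $H^{s-1}(\R^n)$ with $\Ds^s_\mu(\R^n)=\Phi^{-1}(0)$. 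As $\Phi$ is continuous and $\{0\}$ is closed, this already gives that $\Ds^s_\mu(\R^n)$ is closed in $\Ds^s(\R^n)$.

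Next I would reduce everything to a neighbourhood of $\operatorname{id}$. Writing $\varphi=\operatorname{id}+f$, right translation $R_\eta(\varphi)=\varphi\circ\eta$ reads $f\mapsto(\eta-\operatorname{id})+f\circ\eta$ in the chart; since $f\mapsto f\circ\eta$ is a bounded linear isomorphism of $H^s(\R^n;\R^n)$ for fixed $\eta\in\Ds^s(\R^n)$ (with inverse $g\mapsto g\circ\eta^{-1}$), $R_\eta$ is an affine, hence real analytic, diffeomorphism of $\Ds^s(\R^n)$. By the multiplicativity $\det\big(d(\varphi\circ\eta)\big)=\big(\det d\varphi\circ\eta\big)\cdot\det d\eta$, $R_\eta$ maps $\Ds^s_\mu(\R^n)$ onto itself whenever $\eta\in\Ds^s_\mu(\R^n)$. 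Thus it suffices to produce one submanifold chart near $\operatorname{id}$ and to transport it by the maps $R_\eta$, $\eta\in\Ds^s_\mu(\R^n)$, which then cover $\Ds^s_\mu(\R^n)$ and are real analytically compatible.

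The linearisation at $\operatorname{id}$ is the decisive object: by Jacobi's formula $D\Phi(\operatorname{id})v=\operatorname{div}v$. Its kernel $\{v\in H^s(\R^n;\R^n):\operatorname{div}v=0\}$ is complemented for every $n\geq 2$, since the Leray projector $P=\operatorname{id}-\nabla\Delta^{-1}\operatorname{div}$ has the bounded Fourier symbol $\operatorname{id}-\xi\otimes\xi/|\xi|^2$ and therefore acts boundedly on $H^s(\R^n;\R^n)$, projecting onto the divergence free fields along the gradient fields. For $n\geq 3$ the operator $\nabla\Delta^{-1}$ is moreover a bounded right inverse of $\operatorname{div}:H^s\to H^{s-1}$, because its symbol $\xi/|\xi|^2$ is square integrable against the $H^s$ weight near $\xi=0$ exactly when $n\geq 3$. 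Hence $D\Phi(\operatorname{id})$ is onto with splitting kernel, the real analytic submersion theorem supplies an ambient chart straightening $\Ds^s_\mu(\R^n)$ near $\operatorname{id}$ onto the divergence free fields, and transport by $R_\eta$ finishes the case $n\geq 3$.

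The hard part, and what I expect to be the main obstacle, is $n=2$, where $\operatorname{div}:H^s(\R^2;\R^2)\to H^{s-1}(\R^2)$ is \emph{not} onto: the gradient right inverse creates a low frequency singularity $\hat v(\xi)\sim\xi\,\hat f(0)/|\xi|^2$ that fails to be square integrable at the origin, so any $f$ with nonvanishing zero mode lies outside the range, which is dense but not closed in $H^{s-1}(\R^2)$. The route I would take is to notice that the constraint nevertheless lands in that range: with $w=\varphi-\operatorname{id}$ one has $\det(d\varphi)-1=\operatorname{div}w+\det(dw)$, and the Jacobian determinant is a null Lagrangian, $\det(dw)=\operatorname{div}\big(w_1\partial_2 w_2,\,-w_1\partial_1 w_2\big)$, so $\Phi(\varphi)$ is a total divergence with vanishing zero mode. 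One then has to identify the correct Banach target $E\subseteq H^{s-1}(\R^2)$ encoding this infrared constraint, equip it with the graph norm of a right inverse built from the Fourier cut off $\chi(D)$ used throughout the paper (inverting $\Delta$ by $\Delta^{-1}(1-\chi(D))$ on the high modes and exploiting the total divergence structure on the band limited low modes), and check that $\Phi$ is real analytic into $E$ with $D\Phi(\operatorname{id})$ onto $E$ and splitting kernel. Granting this infrared analysis of the divergence equation on $\R^2$, the implicit function theorem and the transport argument conclude the proof exactly as before.
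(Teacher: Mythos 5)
The paper itself defers the proof of Theorem \ref{th_submanifold} to Inci \cite{thesis}, so there is no in-paper argument to compare with; judged on its own, your proposal has the right architecture (present $\Ds^s_\mu(\R^n)$ as the zero level set of $\Phi(\varphi)=\det(d\varphi)-1$, reduce to a neighbourhood of $\operatorname{id}$ by right translation, apply a submersion/implicit function theorem), and the closedness, the analyticity of $\Phi$, the affine right-translation argument and the Jacobi formula $D\Phi(\operatorname{id})v=\operatorname{div}v$ are all fine. The genuine error is the claim that $\operatorname{div}:H^s(\R^n;\R^n)\to H^{s-1}(\R^n)$ is onto for $n\geq 3$. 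Boundedness of the multiplier $\nabla\Delta^{-1}$ from $H^{s-1}$ to $H^s$ would require $\sup_\xi(1+|\xi|^2)^{1/2}|\xi|^{-1}<\infty$; local square integrability of the symbol $\xi/|\xi|^2$ is not the relevant condition, because $\mathcal F[f]$ need not be bounded near $\xi=0$ for $f\in H^{s-1}(\R^n)$ (such $f$ need not lie in $L^1$). Indeed, if $f=\operatorname{div}v$ with $v\in H^s\subseteq L^2$, then $|\mathcal F[f](\xi)|\leq|\xi|\,|\mathcal F[v](\xi)|$, so $|\xi|^{-1}\mathcal F[f]\in L^2$ is a \emph{necessary} condition for membership in the range; taking $\mathcal F[f](\xi)\sim|\xi|^{-n/2}\big(\log(1/|\xi|)\big)^{-1}$ near $\xi=0$ and compactly supported produces $f\in H^{s-1}(\R^n)$ outside the range in \emph{every} dimension $n\geq 2$. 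Hence the range of $D\Phi(\operatorname{id})$ is dense but not closed in $H^{s-1}(\R^n)$ for all $n\geq 2$, the submersion theorem with target $H^{s-1}(\R^n)$ is unavailable, and the low-frequency obstruction you isolate for $n=2$ is not special to $n=2$.

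This means the substantive content of the theorem is exactly the step you defer: one must replace $H^{s-1}(\R^n)$ by a smaller Banach space $E$ (morally $\operatorname{div}\big(H^s(\R^n;\R^n)\big)$ with a norm making $\operatorname{div}$ a split surjection onto $E$), verify that $\Phi$ is real analytic with values in $E$ -- here the null Lagrangian structure of the determinant, which you correctly record for $n=2$ and which persists in all dimensions, is what places the nonlinear part of $\det(d\varphi)-1-\operatorname{div}(\varphi-\operatorname{id})$ into $E$ -- and check that $D\Phi(\operatorname{id})=\operatorname{div}$ is onto $E$ with complemented kernel (the Leray projector argument for the kernel splitting is correct as you state it). As written, ``granting this infrared analysis'' concedes the only nontrivial point, and it must be granted for all $n\geq 2$, not just $n=2$; so the proposal is a reasonable plan with the key lemma missing rather than a proof.
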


So the dynamics of \eqref{E} in Lagrangian coordinates is real analytic on $\Ds^s_\mu(\R^n)$ or expressed with the exponential map

\begin{Coro}
Let $n \geq 2$ and $s > n/2+1$. Then
\[
 \exp:U \cap H^s_\sigma(\R^n;\R^n) \to \Ds^s_\mu(\R^n)
\]
is real analytic.
\end{Coro}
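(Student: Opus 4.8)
The plan is to combine the already-established real analyticity of $\exp$ on $U$ with the submanifold structure provided by Theorem~\ref{th_submanifold}, after first verifying that divergence-free initial data produce volume-preserving diffeomorphisms. Recall that $H^s_\sigma(\R^n;\R^n) = \{u \in H^s(\R^n;\R^n) \mid \operatorname{div} u = 0\}$ is the space of solenoidal vector fields; since $\operatorname{div}:H^s(\R^n;\R^n) \to H^{s-1}(\R^n)$ is a bounded linear map, $H^s_\sigma(\R^n;\R^n)$ is a closed linear subspace of $H^s(\R^n;\R^n)$, and $U \cap H^s_\sigma(\R^n;\R^n)$ is open in it.

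First I would show that $\exp$ maps $U \cap H^s_\sigma(\R^n;\R^n)$ into $\Ds^s_\mu(\R^n)$. Fix $u_0 \in U \cap H^s_\sigma(\R^n;\R^n)$ and let $\varphi(\cdot;u_0)$ be the solution of the geodesic equation \eqref{geodesic_eq} on $[0,1]$, which exists by the definition of $U$. By Theorem~\ref{th_main}, since $\operatorname{div} u_0 = 0$, the curve $u := \partial_t\varphi \circ \varphi^{-1}$ is a solution of \eqref{E}, and hence $\operatorname{div} u(t) = 0$ for all $t \in [0,1]$ (this preservation of the divergence-free condition is the content of Proposition~\ref{prop_alternative}). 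Combining this with the Liouville/Jacobi identity $\partial_t \det(d\varphi(t)) = \big((\operatorname{div} u)(t)\circ\varphi(t)\big)\,\det(d\varphi(t))$ and the initial condition $\det(d\varphi(0)) \equiv 1$, we conclude $\det(d\varphi(t)) \equiv 1$, i.e. $\varphi(t) \in \Ds^s_\mu(\R^n)$ for every $t \in [0,1]$. In particular $\exp(u_0) = \varphi(1;u_0) \in \Ds^s_\mu(\R^n)$.

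Next I would transfer the analyticity. The map $\exp:U \to \Ds^s(\R^n)$ is real analytic by ODE theory, as noted after Definition~\ref{def_exp}. Since the inclusion $H^s_\sigma(\R^n;\R^n) \hookrightarrow H^s(\R^n;\R^n)$ is continuous and linear, the restriction $\exp|_{U \cap H^s_\sigma}:U \cap H^s_\sigma(\R^n;\R^n) \to \Ds^s(\R^n)$ remains real analytic as a map into the ambient manifold. By the previous step its image lies in $\Ds^s_\mu(\R^n)$, which by Theorem~\ref{th_submanifold} is a closed real analytic submanifold of $\Ds^s(\R^n)$. It then remains only to corestrict: an analytic map into the ambient manifold whose image is contained in an analytic submanifold is analytic as a map into that submanifold.

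The main obstacle is exactly this corestriction step, and it is where Theorem~\ref{th_submanifold} is indispensable. Being a real analytic submanifold, $\Ds^s_\mu(\R^n)$ admits around each of its points an adapted chart of $\Ds^s(\R^n)$ in which $\Ds^s_\mu(\R^n)$ corresponds to the intersection with a closed (split) subspace. Composing $\exp|_{U \cap H^s_\sigma}$ with such a chart and the associated analytic projection onto that subspace, and using that the image already lies in $\Ds^s_\mu(\R^n)$, shows that the corestricted map $\exp:U \cap H^s_\sigma(\R^n;\R^n) \to \Ds^s_\mu(\R^n)$ is real analytic when read in the submanifold charts, which is the assertion of the corollary.
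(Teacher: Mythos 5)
Your argument is correct and is essentially the one the paper intends: the paper states this corollary without proof as an immediate consequence of the analyticity of $\exp$ on $U$, the preservation of $\operatorname{div}u=0$ (Proposition \ref{prop_alternative} together with the Liouville identity $\partial_t\det(d\varphi)=\big((\operatorname{div}u)\circ\varphi\big)\det(d\varphi)$), and the submanifold structure of $\Ds^s_\mu(\R^n)$ from Theorem \ref{th_submanifold}. Your explicit handling of the restriction to the closed subspace $H^s_\sigma(\R^n;\R^n)$ and the corestriction via adapted charts fills in exactly the routine details the paper leaves implicit.
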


\section{Integration of $H^s$-vector fields}\label{integration}

The goal of this section is to prove that we can integrate a $H^s$-vector field to a flow in $\Ds^s(\R^n)$. More precisely

\begin{Prop}\label{prop_integration}
Let $s > n/2+1$ and $T>0$. For a given $u \in C\big([0,T];H^s(\R^n;\R^n)\big)$ there is a unique $\varphi \in C^1\big([0,T];\Ds^s(\R^n)\big)$ solving
\[
 \partial_t \varphi = u \circ \varphi \mbox{ on } [0,T];\quad \varphi(0)=\operatorname{id} \in \Ds^s(\R^n).
\]
\end{Prop}

\begin{Rem}
Proposition \ref{prop_integration} was proved in \cite{fischer}. The idea there is the following. If we write $\varphi^{-1}=\operatorname{id}+f$, where $f \in C^0\big([0,T];H^s(\R^n;\R^n)\big)$, we get by differentiating $\varphi^{-1} \circ \varphi = \operatorname{id}$
\[
 \partial_t f \circ \varphi + (I_n + df) \circ \varphi \cdot \partial_t \varphi = 0 
\]
or the following transport equation for $f$
\[
 \partial_t f + u + df \cdot u=0.
\]
with coefficients in $H^s$. Now one can use the theory for linear symmetric hyperbolic systems developed in \cite{fischer} to solve this problem. But we will give a more ''dynamical systems''-proof.
\end{Rem}

The uniqueness part of the proposition is an easy task. Indeed by the Sobolev imbedding we see that $u$ is a uniformly Lipschitz vector field $u:[0,T] \times \R^n \to \R^n$ with respect to the spatial variable, because we have
\[
 |u(t,x)-u(t,y)| \leq C ||u(t)||_s |x-y| \leq CM |x-y|
\]
where $M=\max_{0 \leq t \leq T} ||u(t)||_s$. Thus we have a unique flow $\tilde \varphi:[0,T] \times \R^n \to \R^n$.\\ \\
\noindent
Before proving the proposition we will make some preparation. Since the composition map is linear in the first entry we can get the following local linear growth estimate.

\begin{Lemma}\label{lemma_linear_growth}
Let $s > n/2+1$, $0 \leq s' \leq s$ and $\varphi_\bullet \in \Ds^s(\R^n)$ be given. Then there is a neighborhood $\mathcal G$ of $\varphi_\bullet$ in $\Ds^s(\R^n)$ and a $C > 0$ with
\[
 \frac{1}{C} ||f||_{s'} \leq ||f \circ \varphi||_{s'} \leq C ||f||_{s'}
\]
for all $f \in H^{s'}(\R^n)$ and $\varphi \in \mathcal G$.
\end{Lemma}

\begin{proof}
Consider the composition map
\[
 \mu:H^{s'}(\R^n) \times \Ds^s(\R^n) \to H^{s'}(\R^n),\quad (f,\varphi) \mapsto f \circ \varphi
\]
which by \cite{composition} is continuous. As we have $\mu(0,\varphi_\bullet)=0$ there exist, by the continuity of $\mu$, $R > 0$ and a neighborhood $\mathcal G$ of $\varphi_\bullet$ such that we have
\[
 ||f \circ \varphi||_{s'} \leq 1
\]
for all $f \in H^{s'}(\R^n)$ with $||f||_{s'} \leq R$ and for all $\varphi \in \mathcal G$. By linearity we thus get
\[
 ||f \circ \varphi||_{s'} \leq \frac{1}{R} ||f||_{s'}
\]
for all $f \in H^{s'}(\R^n)$ and for all $\varphi \in \mathcal G$. The same reasoning gives, by shrinking $R$ and $\mathcal G$ if necessary,
\[
 ||g \circ \varphi^{-1}||_{s'} \leq \frac{1}{R} ||g||_{s'}
\]
for all $g \in H^{s'}(\R^n;\R^n)$ and $\varphi \in \mathcal G$. Replacing $g$ by $f \circ \varphi$ we get the claim.
\end{proof}

\noindent
The following lemma is a special case of Proposition \ref{prop_integration}. The proof follows the one given in \cite{ebin_marsden}.

\begin{Lemma}\label{lemma_integration}
Assume $s > n/2+2$. Then the claim of Proposition \ref{prop_integration} holds.
\end{Lemma}

\begin{proof}
Note that for $s>n/2+2$ the space $\Ds^{s-1}(\R^n)$ is defined. In a neighborhood of $\operatorname{id} \in \Ds^{s-1}(\R^n)$, let's say
\[
 \mathcal G^{s-1}_\varepsilon:=\big\{ \varphi \in \Ds^{s-1}(\R^n) \; \big| \; ||\varphi - \operatorname{id}||_{s-1} < \varepsilon \big\}
\]
we have by Lemma \ref{lemma_linear_growth} for some constant $C > 0$ 
\[
 || f \circ \varphi||_{s-1} \leq C ||f||_{s-1}
\]
for all $f \in H^{s-1}(\R^n)$ and for all $\varphi \in \mathcal G^{s-1}_\varepsilon$. By shrinking $\varepsilon$ we can assume that $\operatorname{id} + g \in \Ds^{s-1}(\R^n)$ for all $g \in H^{s-1}(\R^n;\R^n)$ with $||g||_{s-1} < \varepsilon$. Now consider for the given $u \in C\big([0,T];H^s(\R^n;\R^n)\big)$ the map
\[
 V: [0,T] \times \Ds^{s-1}(\R^n) \to \Ds^{s-1}(\R^n),\quad (t,\varphi) \mapsto u(t) \circ \varphi.
\]
From \cite{composition} we know that $V$ is a time-dependent vector field on $\Ds^{s-1}(\R^n)$, which is continuous in the time variable and $C^1$ in the $\varphi$ variable. By the existence theory for ODE's (see e.g. \cite{dieudonne}) we know that there is some $\delta > 0$ and a $\psi \in C^1\big([0,\delta],\Ds^{s-1}(\R^n)\big)$ with
\[
 \partial_t \psi = u \circ \psi \mbox{ on } [0,\delta];\quad \psi(0)=\operatorname{id}.
\]
Assume now that we have for some $0 < \delta' \leq \delta$
\[
 ||\psi(t) - \operatorname{id}||_{s-1} < \varepsilon
\]
for $0 \leq t \leq \delta'$. Note that by continuity such a $\delta'$ exists. Recall that we have
\[
 \psi(t) = \operatorname{id} + \int_0^t u(\tau) \circ \psi(\tau) \,d\tau.
\]
for all $0 \leq t \leq \delta'$. Thus we get for any $t \in [0,\delta']$
\[
 ||\psi(t) - \operatorname{id}||_{s-1} \leq C \int_0^t ||u(\tau)||_{s-1} \,d\tau \leq CM\delta'
\]
where $M=\max_{0 \leq \tau \leq T} ||u(\tau)||_s$. In particular by choosing $\delta' \leq \varepsilon/(2CM)$ we get
\begin{equation}\label{stays_in_ball}
 ||\psi(t)-\operatorname{id}||_{s-1} \leq \varepsilon/2
\end{equation}
for $0 \leq t \leq \delta'$. As $C$ is fixed, this choice of $\delta'$ just depends on $M$ and not on the particular values of $u$. Thus we see that $\forall t_0 \in [0,T]$ the ODE
\[
 \partial_t \psi = u \circ \psi;\quad \psi(t_0)=\operatorname{id}
\]
has a solution on $[t_0,t_0 + \delta'] \cap [t_0,T]$ as for these values of $t$ the condition \eqref{stays_in_ball} is preserved. Now we proceed as follows: We solve
\[
 \partial_t \psi_1 = u \circ \psi_1;\quad \psi_1(0)=\operatorname{id}
\]
on $[0,\delta']$. Then we solve
\[
 \partial_t \psi_2 = u \circ \psi_2;\quad \psi_2(\delta')=\operatorname{id}
\]
on $[\delta',2\delta']$ (without loss we can assume $2\delta' \leq T$) and define $\varphi:[0,2\delta'] \to \Ds^{s-1}(\R^n)$ by
\[
 \varphi(t) = \begin{cases} \psi_1(t), \quad & t \in [0,\delta') \\ \psi_2(t) \circ \psi_1(\delta'), \quad & t \in [\delta',2\delta'] \end{cases}.
\]
From the definition it is clear that $\varphi \in C\big([0,2\delta'];D^{s-1}(\R^n)\big)$. From the properties of $\psi_1,\psi_2$ we have
\[
 \varphi(t) = \operatorname{id} + \int_0^t u(\tau) \circ \varphi(\tau) \,d\tau
\]
for all $t \in [0,2\delta']$. Indeed on $[0,\delta']$ this is clear. For $t \in [\delta',2\delta']$ we have
\[
 \psi_2(t) = \operatorname{id} + \int_{\delta'}^t u(\tau) \circ \psi_2(\tau) \,d\tau
\]
or 
\[
 \psi_2(t)-\operatorname{id} = \int_{\delta'}^t u(\tau) \circ \psi_2(\tau) \,d\tau.
\]
Applying the continuous linear operator $R_{\psi_1(\delta')}$ to this equation we get
\[
 \psi_2(t) \circ \psi_1(\delta') = \psi_1(\delta') + \int_{\delta'}^t u(\tau) \circ \psi_2(\tau) \circ \psi_1(\delta') \,d\tau
\]
which is by definition
\[
 \varphi(t) = \varphi(\delta') + \int_{\delta'}^t u(\tau) \circ \varphi(\tau) \,d\tau
\]
showing the claim. Iterating this procedure we can construct a solution 
\[
\varphi \in C^1\big([0,T];\Ds^{s-1}(\R^n)\big).
\]
Next we want to show that we have actually 
\[
\varphi \in C^1\big([0,T];\Ds^s(\R^n)\big).
\]
Writing $\varphi=\operatorname{id} + f$ where $f \in C^1\big([0,T];H^{s-1}(\R^n)\big)$ we get by taking the differential of $\partial_t \varphi = u \circ \varphi$
\begin{equation}\label{ode_df}
 \partial_t df = du \circ \varphi \cdot (I_n + df)
\end{equation}
where $df \in C^1\big([0,T];H^{s-2}(\R^n;\R^{n \times n})\big)$ denotes the Jacobian of $f$ and $I_n$ the $n \times n$-identity matrix. As we have by the results for the composition map given in \cite{composition}
\[
 du \circ \varphi \in C^0\big([0,T];H^{s-1}(\R^n;\R^{n \times n})\big)
\]
we can view \eqref{ode_df} as a inhomogenous linear ODE with coefficients in $H^{s-1}$. By uniqueness of solutions this means that $df$ lies actually in 
\[
C^1\big([0,T];H^{s-1}(\R^n;\R^{n \times n})\big). 
\]
This show that $\varphi \in C^1\big([0,T];\Ds^s(\R^n)\big)$. Hence the claim. 
\end{proof}

\noindent
To prove Proposition \ref{prop_integration} we will need the following well-known lemmas.

\begin{Lemma}\label{lemma_interpolation}
Let $f \in H^s(\R^n)$, $s \geq 0$. Then we have the following interpolation inequality for $0 \leq s' \leq s$ and $\lambda \in (0,1)$
\begin{equation}\label{interpolation_ineq}
||f||_{\lambda s' + (1-\lambda) s} \leq ||f||_{s'}^\lambda ||f||_s^{1-\lambda}.
\end{equation}
\end{Lemma}

\begin{proof}
We have by definition
\begin{eqnarray*}
||f||^2_{\lambda s' + (1-\lambda) s} &=& \int_{\R^n} (1+|\xi|^2)^{\lambda s' + (1-\lambda) s} |\hat f(\xi)|^2 d\xi \\
&=& \int_{\R^n} (1+|\xi|^2)^{\lambda s'} |\hat f(\xi)|^{2\lambda} (1+|\xi|^2)^{(1-\lambda)s} |\hat f(\xi)|^{2(1-\lambda)} d\xi \\
\noalign{\noindent and using the H\"older inequality}\\
&\leq& ||(1+|\xi|^2)^{\lambda s'} |\hat f(\xi)|^{2\lambda}||_{L^\frac{1}{\lambda}} ||(1+|\xi|^2)^{(1-\lambda)s} |\hat f(\xi)|^{2(1-\lambda)} ||_{L^\frac{1}{1-\lambda}} \\
&=& ||f||_{s'}^{2\lambda} ||f||_s^{2(1-\lambda)} 
\end{eqnarray*}
which shows the claim.
\end{proof}

\noindent
For approximating functions by regular ones we have

\begin{Lemma}\label{lemma_approximation}
Let $f \in H^s(\R^n)$, $s \geq 0$. Let $\chi_k(D)$, $k \geq 1$, be the Fourier multiplier with symbol $\chi_k$ given by 
\[
 \chi_k(\xi)=\begin{cases} 1, \quad &|\xi| \leq k \\ 0, \quad & |\xi| > k \end{cases}
\]
Then we have $\chi_k(D) f \in H^{s+1}(\R^n)$ and
\[
 \chi_k(D) f \to f \quad \mbox{ in } H^s(\R^n)
\]
as $k \to \infty$.
\end{Lemma}

\begin{proof}
That $\chi_k(D) f \in H^{s+1}(\R^n)$ follows from
\begin{multline*}
 ||\chi_k(D)f||_{s+1}^2 = \int_{|\xi| \leq k} (1+|\xi|^2)^{s+1} |\hat f(\xi)|^2 d\xi \leq (1+k^2)^{s+1} \int_{\R^n} |\hat f(\xi)|^2 d\xi < \infty.
\end{multline*}
One has actually $\chi_k(D)f \in H^\infty(\R^n)=\cap_{s \geq 0} H^s(\R^n)$, but this is not needed here. For the second claim we write
\[
 ||\chi_k(D) f - f||_s^2 = \int_{|\xi| > k} (1+|\xi|^2)^s |\hat f(\xi)|^2 d\xi.
\]
Now by Lebesgue's dominated convergence we get
\[
 \int_{|\xi| > k} (1+|\xi|^2)^s |\hat f(\xi)|^2 d\xi \to 0
\]
as $k \to \infty$. Hence the claim.
\end{proof}

\noindent
We even have that this convergence is uniform on compact curves.

\begin{Coro}\label{coro_uniform}
Let $u \in C^0\big([0,T];H^s(\R^n)\big)$ for some $T > 0$. Then $\chi_k(D) u \in C^0\big([0,T];H^{s+1}(\R^n)\big)$ and
\[
 \sup_{0 \leq t \leq T} ||\chi_k(D) u(t) - u(t)||_s \to 0
\]
as $k \to 0$.
\end{Coro}

\begin{proof}
We will prove a slightly stronger result. We will prove that for a compact set $K \subseteq H^s(\R^n)$ we have
\[
 \chi_k(D) f \to f \quad \mbox{ in } H^s(\R^n)
\]
as $k \to \infty$ uniformly in $f \in K$. First note that $||\chi_k(D) f||_s \leq ||f||_s$. Let $\varepsilon > 0$. As $K$ is compact we have a finite set of points (let's say $M$ points) $(f_m)_{1 \leq m \leq M} \subseteq H^s(\R^n)$ such that
\[
 K \subseteq \cup_{m=1}^M B_\varepsilon(f_m)
\] 
where
\[
 B_\varepsilon(f) = \big\{ g \in H^s(\R^n) \,\big|\, ||g-f||_s < \varepsilon \big\}
\]
is the $\varepsilon$-ball in $H^s(\R^n)$ around $f$ with radius $\varepsilon$. By Lemma \ref{lemma_approximation} there is a $N$ such that
\[
 ||\chi_k(D) f_m - f_m||_s < \varepsilon 
\]
for all $k \geq N$ and $1 \leq m \leq M$. For an arbitrary $f \in K$ take a $f_j$, $1 \leq j \leq M$, with $f \in B_\varepsilon(f_j)$. With this choice we have
\begin{multline*}
 ||\chi_k(D) f - f||_s \leq ||\chi_k(D)f - \chi_k(D) f_j||_s + ||\chi_k(D) f_j - f_j ||_s + ||f_j - f||_s < 3 \varepsilon
\end{multline*}
for all $k \geq N$. This proves the claim for the compact set $K$. Now as the image of the curve $u$ is compact we get the desired result.
\end{proof}

\noindent
We know that there is some $\varepsilon > 0$ such that for all $g \in H^s(\R^n;\R^n)$ with $||g||_s < \varepsilon$ we have $\operatorname{id}+g \in \Ds^s(\R^n)$. Denote this set by $\mathcal G_\varepsilon^s$, i.e.
\[
 \mathcal G_\varepsilon^s = \big\{ \varphi \in \Ds^s(\R^n) \, \big| \, ||\varphi - \operatorname{id}||_s < \varepsilon \big\}.
\] 
By Lemma \ref{lemma_linear_growth} we get (by shrinking $\varepsilon$ if necessary) for all $\varphi \in \mathcal G_\varepsilon^s$ 
\begin{equation}\label{uniform_estimate1}
 ||f \circ \varphi||_{s-1} \leq C ||f||_{s-1}, \quad \forall f \in H^{s-1}(\R^n;\R^n)
\end{equation}
and
\begin{equation}\label{uniform_estimate2}
 ||f \circ \varphi||_s \leq C ||f||_s, \quad \forall f \in H^s(\R^n;\R^n)
\end{equation}
for some $C>0$. We further assume by making $0 < \varepsilon < 1$ small enough that we have $\det(d_x \varphi) > \varepsilon$ for all $x \in \R^n$ and for all $\varphi \in \mathcal G_\varepsilon^s$. Because of the Sobolev imbedding this is possible. Now with this choice of $\varepsilon$ resp. $\mathcal G_\varepsilon^s$ we prove the following Lipschitz type estimate.

\begin{Lemma}\label{lemma_lipschitz_type}
There is $\tilde C > 0$ such that for any $\varphi,\psi \in \mathcal G_\varepsilon^s$
\[
 ||f \circ \varphi - f \circ \psi||_{s-1} \leq \tilde C ||f||_s ||\varphi - \psi||_{s-1},\quad \forall f \in H^s(\R^n).
\]
\end{Lemma}

\begin{proof}
By the fundamental lemma of calculus we have pointwise
\begin{eqnarray}
\nonumber
 f \circ \varphi - f \circ \psi &=& \int_0^1 \partial_t \left( f\big(\psi + t(\varphi - \psi)\big) \right) dt \\
\label{fundamental_lemma}
&=& \int_0^1 \nabla f\big(\psi + t (\varphi - \psi)\big) (\varphi - \psi) dt
\end{eqnarray}
As $t \mapsto \psi +t (\varphi - \psi)$ is a continuous curve in $\mathcal G_\varepsilon^s$ we see that the integrand is a continuous curve $H^{s-1}(\R^n;\R^n)$. Indeed $\varphi - \psi \in H^s(\R^n;\R^n)$ and $H^{s-1}$ is a Banach algebra. Thus we see that \eqref{fundamental_lemma} is an identity in $H^{s-1}(\R^n;\R^n)$. Therefore we have for some $\tilde C >0$
\begin{eqnarray*}
||f \circ \varphi - f \circ \psi||_{s-1} &\leq& \int_0^1 \tilde C ||\nabla f \big(\psi + t (\varphi - \psi)\big)||_{s-1} ||\varphi - \psi||_{s-1} dt \\
&\leq& \tilde C ||f||_s ||\varphi - \psi||_{s-1}
\end{eqnarray*}
where we used \eqref{uniform_estimate1} implying
\[
 ||\nabla f \big(\psi + t (\varphi - \psi)\big)||_{s-1} \leq C ||\nabla f||_{s-1} \leq C ||f||_s
\] 
and the Banach algebra property of $H^{s-1}(\R^n)$. This finishes the proof. 
\end{proof}

\noindent
Now we can prove the main proposition. We will do this using some ''energy'' estimates. We take $\mathcal G_\varepsilon^s$ as in Lemma \ref{lemma_lipschitz_type} 

\begin{proof}[Proof of Proposition \ref{prop_integration}]
Let $u \in C^0\big([0,T];H^s(\R^n;\R^n)$ be the given continuous vector field. We define $u_k=\chi_k(D)u$. We know by Corollary \ref{coro_uniform} that $u_k(t) \to u(t)$ in $H^s$ uniformly in $t \in [0,T]$. By Lemma we know that $u_k \in C^0\big([0,T];H^{s+1}(\R^n;\R^n)\big)$. Now Lemma \ref{lemma_integration} gives us corresponding flows $\varphi_k \in C^1\big([0,T];\Ds^{s+1}(\R^n)\big)$ solving
\[
 \partial_t \varphi_k = u_k \circ \varphi_k \mbox{ on } [0,T];\quad \varphi_k(0)=\operatorname{id}.
\]
We will show first that $\varphi_k$ converges at least on some short time interval $[0,\delta]$ to the desired solution. Consider the integral relation
\[
 \varphi_k(t) = \operatorname{id} + \int_0^t u_k(\tau) \circ \varphi_k(\tau) \,d\tau, \quad t \in [0,T].
\]
We reason as in the proof of Lemma \ref{lemma_integration}. For $k \geq 1$ fixed, assume that $\varphi_k(t) \in \mathcal G_\varepsilon^s$ for all $0 \leq t \leq \delta'$, for some $\delta' > 0$. Then we have for $t \in [0,\delta']$
\[
 ||\varphi_k(t)-\operatorname{id}||_s \leq \int_0^t ||u_k(\tau) \circ \varphi_k(\tau)||_s \,d\tau \leq C \int_0^t ||u_k(\tau)||_s \, d\tau
\]
where we used \eqref{uniform_estimate2}. Now as we have $u_k \to u$ uniformly in $t \in [0,T]$ there is some $M >0$ with
\[ 
 ||u_k(t)||_s < M
\]
for all $t \in [0,T]$ and for all $k \geq 1$. Thus we see that for $\delta \leq \frac{\varepsilon}{2CM}$ we have $||\varphi_k(t) - \operatorname{id}||_s < \varepsilon$ for all $t \in [0,\delta]$, i.e. we have $\varphi_k(t) \in \mathcal G_\varepsilon^s$. Now we will show that $\varphi_k$ converges on $[0,\delta]$. We have for $t \in [0,\delta]$
\begin{eqnarray*}
 \varphi_k(t)-\varphi_j(t) &=& \int_0^t u_k \circ \varphi_k - u_j \circ \varphi_j \,d\tau \\
&=& \int_0^t u_k \circ \varphi_k - u_j \circ \varphi_k \,d\tau + \int_0^t u_j \circ \varphi_k - u_j \circ \varphi_j \,d\tau.
\end{eqnarray*}
Taking the $H^{s-1}$-norm we get for any $t \in [0,\delta]$
\[
||\varphi_k(t)- \varphi_j(t)||_{s-1} \leq C \int_0^t ||u_k - u_j||_s \,d\tau + C \int_0^t ||u_j||_s ||\varphi_k- \varphi_j||_{s-1} \,d\tau 
\]
where we used Lemma \ref{lemma_lipschitz_type}. Thus from Gronwall's lemma we get
\[
 ||\varphi_k(t)-\varphi_j(t)||_{s-1} \leq \left[C \int_0^\delta ||u_k-u_j||_s\right] e^{\delta C M}
\]
for all $t \in [0,\delta]$. Thus we see that $\varphi_k - \varphi_j$ is Cauchy in $C^0\big([0,\delta];H^{s-1}(\R^n;\R^n)\big)$. Now take a $\lambda \in (0,1)$ with
\[
 s'=\lambda (s-1) + (1-\lambda) s > n/2+1.
\]
As we have $s > n/2+1$ such a $\lambda$ exists. We then have by Lemma \ref{lemma_interpolation}
\begin{eqnarray*}
 ||\varphi_k(t)-\varphi_j(t)||_{s'} &\leq& ||\varphi_k(t)-\varphi_j(t)||_{s-1}^\lambda ||\varphi_k(t)-\varphi_j(t)||_s^{1-\lambda} \\
&\leq& ||\varphi_k(t)-\varphi_j(t)||_{s-1}^\lambda \left(||\varphi_k(t)-\operatorname{id}||_s + ||\varphi_j(t)-\operatorname{id}||_s\right)^{1-\lambda} \\
&\leq& ||\varphi_k(t)-\varphi_j(t)||_{s-1}^\lambda (2\varepsilon)^{1-\lambda}
\end{eqnarray*}  
showing that $\varphi_k$ converges in $H^{s'}$ on $[0,\delta]$. Thus there exists a $\varphi$ with $\varphi- \operatorname{id} \in C^0\big([0,\delta];H^{s'}(\R^n;\R^n)\big)$ such that we have 
\[
 ||\varphi_k(t) - \varphi(t)||_{s'} \to 0
\]
uniformly in $t \in [0,\delta]$. As $s' > n/2 +1$ we have by the Sobolev imbedding for all $x \in \R^n$
\[
 \det(d_x \varphi) = \lim_{k \to \infty} \det(d_x \varphi_k) \geq \varepsilon > 0.
\]
Hence $\varphi \in C^0\big([0,\delta];\Ds^{s'}(\R^n)\big)$. We claim that $\varphi=\tilde \varphi$ on $[0,\delta]$. Recall that we denote by $\tilde \varphi$ the flow of the vector field $u:[0,T]\times \R^n \to \R^n$. To show that $\varphi$ and $\tilde \varphi$ agree on $[0,\delta]$ consider for $x \in \R^n$ and $t \in [0,\delta]$
\begin{equation}\label{integral_limit}
\varphi_k(t,x) = x + \int_0^t u_k\big(\tau,\varphi_k(\tau,x)\big) \,d\tau.
\end{equation}
By the Sobolev imbedding we have (denoting by $|\cdot|$ the euclidean norm in $\R^n$)
\begin{multline*}
\left| u_k\big(t,\varphi_k(t,x)\big) - u\big(t,\varphi(t,x)\big)\right| \leq \left|u_k\big(t,\varphi_k(t,x)\big) - u\big(t,\varphi_k(t,x)\big)\right| \\
+ \left|u\big(t,\varphi_k(t,x)\big) - u\big(t,\varphi(t,x)\big)\right| \leq C ||u_k(t) - u(t)||_{s'} + C ||u||_{s'} ||\varphi_k(t) - \varphi(t)||_{s'}
\end{multline*}
which goes to $0$ uniformly in $t \in [0,\delta]$. Thus taking the limit in \eqref{integral_limit} we arrive at
\begin{equation}\label{limit_integral}
 \varphi(t,x) = x + \int_0^t u\big(\tau,\varphi(\tau,x)\big) \,d\tau.
\end{equation}
By continuity of the composition in $H^{s'}$ we see that the identity \eqref{limit_integral} holds in $H^{s'}$, i.e. we have
\begin{equation}\label{hs_identity}
\varphi(t) = \operatorname{id} + \int_0^t u(\tau) \circ \varphi(\tau) \,d\tau.
\end{equation}
Taking the differential in \eqref{limit_integral} and denoting by $I_n$ the $n \times n$ identity matrix, we have
\[
 d\varphi(t,x) = I_n + \int_0^t du\big(\tau,\varphi(\tau,x)\big) d\varphi(\tau,x) \,d\tau.
\]
Thus $dg:=d\varphi - I_n$ solves for fixed $x \in \R^n$ the ODE
\begin{equation}\label{ode_dg}
\partial_t dg = du \circ \varphi + du \circ \varphi \cdot dg.
\end{equation}
From \cite{composition}, as $s' > n/2+1$ and $s' \geq s-1$, we know that
\[
du \circ \varphi \in C^0\big([0,\delta];H^{s-1}(\R^n;\R^{n \times n})\big). 
\]
Since $H^{s-1}$ is an algebra, we can view \eqref{ode_dg} as a linear inhomogeneous ODE with coefficients in $H^{s-1}$. Thus $dg$ lies actually in 
\[
 C^1\big([0,\delta];H^{s-1}(\R^n;\R^{n \times n})\big).
\]
Thus we get $\varphi \in C^1\big([0,\delta];\Ds^s(\R^n)\big)$ and the identity \eqref{hs_identity} holds in $\Ds^s(\R^n)$. To get $\varphi$ on the whole time interval $[0,T]$ we proceed as in the proof of Lemma \ref{lemma_integration}. As $\delta$ just depends on $M$ we can extend $\varphi$ by $\delta$-steps. After finitely many steps we end up with the desired flow $\varphi \in C^1\big([0,T];\Ds^s(\R^n)\big)$ solving
\[
\partial_t \varphi = u \circ \varphi \mbox{ on } [0,T];\quad \varphi(0)=\operatorname{id}
\]
and this proves the proposition.
\end{proof}

\section{Alternative formulation}\label{section_alternative}

In this section we show that \eqref{alternative_E} gives an alternative formulation of \eqref{E}. First we prove

\begin{Lemma}\label{lemma_dont_miss}
Let $(u,p)$ be a solution to \eqref{E}. Then $u$ is a solution to \eqref{alternative_E}.
\end{Lemma}

\begin{proof}[Proof of Lemma \ref{lemma_dont_miss}]
Taking the divergence in \eqref{E} one has
\[
 -\Delta p = -\operatorname{div} (\nabla p)= \sum_{i,k=1}^n \partial_i u_k \partial_k u_i.
\]
\noindent
On the other hand 
\begin{eqnarray*}
\label{div_B_splitted}
\operatorname{div} \nabla B(u)&=&\Delta B(u) = \sum_{i,k=1}^n \chi(D) \big(\partial_i \partial_k (u_i u_k)\big) + \big(1-\chi(D)\big) (\partial_i u_k \partial_k u_i) \\
\noalign{\noindent and hence using that $\operatorname{div} u =0$}
\label{div_B}
 \operatorname{div} \nabla B(u) &=& \sum_{i,k=1}^n \partial_i u_k \partial_k u_i. 
\end{eqnarray*}
Thus $-\Delta p=\Delta B(u)$. Therefore each component of $\nabla B(u)+\nabla p$ is harmonic. As $\nabla B(u), \nabla p$ vanish at infinity we have actually $\nabla B(u)=-\nabla p$. Thus $u$ solves \eqref{alternative_E}.
\end{proof}

Now let us prove the converse of Lemma \ref{lemma_dont_miss}.

\begin{Lemma}\label{lemma_closedness}
Let $u_0 \in H_\sigma^s(\R^n;\R^n)$ and assume that $u \in C^0\big([0,T];H^s(\R^n;\R^n)\big)$ is a solution to \eqref{alternative_E} for some $T>0$ with initial value $u_0$. Then
\[
 u(t) \in H_\sigma^s(\R^n;\R^n), \quad \forall t \in [0,T]
\]
and $\big(u,-B(u)\big)$ satisfies (S1)-(S3).
\end{Lemma}

\begin{proof}[Proof of Lemma \ref{lemma_closedness}]
To show that $\operatorname{div} u(t)=0$ for any $0 \leq t \leq T$ it suffices to prove that $\partial_t \operatorname{div} u = 0$. Applying $\operatorname{div}$ to \eqref{alternative_E} and using the assumption $\operatorname{div} u_0=0$ one gets
\begin{equation}\label{div_int}
\operatorname{div}u = \int_0^t \sum_{i,k=1}^n \operatorname{div}\nabla B(u)  - \operatorname{div} \big((u \cdot \nabla)u\big)\, d\tau.
\end{equation}
We have
\[
 \operatorname{div} \nabla B(u) = \Delta B(u) = \sum_{i,k=1}^n \chi(D) \big(\partial_i \partial_k (u_i u_k)\big) + \big(1-\chi(D)\big) (\partial_i u_k \partial_k u_i).
\]
Note that
\[
\sum_{i,k=1}^n \partial_i \partial_k (u_i u_k) = 2 (u \cdot \nabla) \operatorname{div} u + (\operatorname{div} u)^2 + \sum_{i,k=1}^n \partial_i u_k \partial_k u_i.
\]
Therefore
\begin{multline}
\chi(D) \sum_{i,k=1}^n \partial_i \partial_k (u_i u_k) + \big(1-\chi(D)\big) \sum_{i,k=1}^n \partial_i u_k \partial_k u_i \\
= \chi(D) 2 (u \cdot \nabla) \operatorname{div} u + \chi(D) (\operatorname{div} u)^2 + \sum_{i,k=1}^n \partial_i u_k \partial_k u_i.
\end{multline}
Furthermore
\[
 -\operatorname{div}\big( (u \cdot \nabla) u\big) = -(u \cdot \nabla)\operatorname{div} u - \sum_{i,k=1}^n \partial_i u_k \partial_k u_i.
\]
Substituting the two identities above into \eqref{div_int} one gets in $L^2$ (as $n \geq 2$, one has $s > n/2+1 \geq 2$)
\begin{equation}\label{u_derivative}
 \partial_t \operatorname{div} u = \chi(D)\big( 2 (u \cdot \nabla)\operatorname{div} u + (\operatorname{div} u)^2\big) - (u \cdot \nabla) \operatorname{div}u.
\end{equation}
Denoting by $\langle \cdot,\cdot \rangle_{L^2}$ the inner product $\int_{\R^n} f g\,dx$ for two real valued $L^2$-functions we have
\begin{equation}\label{L2_derivative}
 \frac{1}{2} \partial_t \langle \operatorname{div}u,\operatorname{div} u \rangle_{L^2} = 2I + II - III
\end{equation}
where
\begin{eqnarray*}
 I &=& \langle \operatorname{div} u,\chi(D) \big((u \cdot \nabla)\operatorname{div}u\big)\rangle_{L^2}\\ 
II &=& \langle \operatorname{div} u,\chi(D) (\operatorname{div} u)^2\rangle_{L^2}\\
III &=& \langle \operatorname{div} u, (u \cdot \nabla) \operatorname{div} u\rangle_{L^2}
\end{eqnarray*}
We now estimate the terms on the right-hand side of \eqref{L2_derivative} seperately for each fixed $0 \leq t \leq T$. To estimate the term $III$ we integrate by parts
\begin{eqnarray*}
 \langle \operatorname{div}u, (u \cdot \nabla) \operatorname{div} u \rangle_{L^2} &=& -\langle \sum_{k=1}^n \partial_k (u_k \operatorname{div} u), \operatorname{div}u \rangle_{L^2}\\
 &=& -\langle (\operatorname{div}u)^2,\operatorname{div}u \rangle_{L^2} - \langle \operatorname{div}u, (u \cdot \nabla) \operatorname{div}u \rangle_{L^2}.
\end{eqnarray*}
Thus we get
\begin{equation}\label{integ_by_parts}
 \langle \operatorname{div}u, (u \cdot \nabla)\operatorname{div}u \rangle_{L^2} = -\frac{1}{2} \langle (\operatorname{div}u)^2, \operatorname{div}u \rangle_{L^2}.
\end{equation}
Using the imbedding $H^{s-1}(\R^n) \hookrightarrow C_0^0(\R^n)$ it follows from \eqref{integ_by_parts} that 
\[
 |(\operatorname{div} u, (u \cdot \nabla) \operatorname{div} u)_{L^2}| \leq \frac{1}{2} ||\operatorname{div}u||_{L^\infty} ||\operatorname{div} u||^2_{L^2}.
\]
To estimate the term $I$ use the $L^2$-symmetry of $\chi(D)$ to get 
\begin{multline*}
 \langle \operatorname{div}u, \chi(D) (u \cdot \nabla)\operatorname{div} u \rangle_{L^2} = \langle \chi(D) \operatorname{div}u , (u \cdot \nabla)\operatorname{div}u \rangle_{L^2}\\
 = -\langle \sum_{k=1}^n \partial_k (u_k \chi(D) \operatorname{div} u), \operatorname{div} u \rangle_{L^2}\\
 = -\langle (\operatorname{div}u) \chi(D) \operatorname{div} u, \operatorname{div} u \rangle_{L^2} 
 - \langle (u \cdot \nabla) \chi(D) \operatorname{div}u, \operatorname{div}u \rangle_{L^2}.
\end{multline*}
Hence 
\[
 2I + II = -\langle \chi(D)\operatorname{div} u,(\operatorname{div}u)^2\rangle_{L^2} - 2 \langle (u \cdot \nabla) \chi(D) \operatorname{div} u,\operatorname{div} u\rangle_{L^2}
\]
or
\begin{eqnarray*}
|2I+II| &\leq & ||\chi(D) \operatorname{div} u||_{L^2} ||\operatorname{div}u||_{L^\infty} ||\operatorname{div} u||_{L^2} + 2 ||(u \cdot \nabla) \chi(D) \operatorname{div} u||_{L^2} ||\operatorname{div} u||_{L^2}\\
&\leq & ||\operatorname{div} u||_{L^\infty} ||\operatorname{div} u||_{L^2}^2 + 2 ||(u \cdot \nabla) \chi(D) \operatorname{div} u||_{L^2} ||\operatorname{div} u||_{L^2}.
\end{eqnarray*}
Using the smoothing property \ref{chi_smoothing} and the imbedding $H^s(\R^n) \hookrightarrow C_0^0(\R^n)$ once more leads to
\[
 ||(u \cdot \nabla )\chi(D) \operatorname{div} u||_{L^2} \leq \sqrt{2} ||u||_{L^\infty} ||\operatorname{div}u||_{L^2}.
\]
Summarizing the above inequalities, we conclude that for any $0 \leq t \leq T$
\[
 \partial_t ||\operatorname{div}u||_{L^2}^2 \leq 3\big(||\operatorname{div}u||_{L^\infty} + ||u||_{L^\infty}\big) ||\operatorname{div}u||_{L^2}^2.
\]
Using the imbedding $H^s(\R^n) \hookrightarrow C_0^1(\R^n)$ we conclude that
\[
 C:= \sup_{0 \leq t \leq T} \big(||\operatorname{div} u(t)||_{L^\infty} + ||u(t)||_{L^\infty}\big) < \infty.
\]
As $\operatorname{div}u(0)=0$ we then get by Gronwall's inequality (see e.g. \cite{majda}) that $\operatorname{div}u(t)=0$ for all $t \in [0,T]$.
\end{proof}

We combine the results proved in Lemma \ref{lemma_dont_miss} and Lemma \ref{lemma_closedness} in the following

\begin{Prop}\label{prop_alternative}
For any solution $u \in C^0\big([0,T];H^s(\R^n;\R^n)\big)$ with $u(0)=u_0 \in H^s_\sigma(\R^n;\R^n)$ of \eqref{alternative_E} the pair $\big(u,-B(u))$ is a solution of \eqref{E}. Conversely any solution to \eqref{E} gives rise to a solution of \eqref{alternative_E}.
\end{Prop}

\section{The submanifold $\Ds^s_\mu(\R^n)$}\label{section_submanifold}

Throughout this section we assume as usual $s > n/2+1$ with $n \geq 2$. We will prove Theorem \ref{th_submanifold} saying that $\Ds^s_\mu(\R^n)$ is a closed analytic submanifold of $\Ds^s(\R^n)$. The most natural way to prove this statement is to consider the analytic map
\begin{equation}\label{map_F}
 \varphi \mapsto \left[F(\varphi):\R^n \to \R,\quad x \mapsto \det(d_x \varphi) - 1\right]
\end{equation}
for $\varphi$ in $\Ds^s(\R^n)$ -- see the proof of the corresponding result for $\Ds^s(M)$, $M$ a compact manifold, of Ebin and Marsden \cite{ebin_marsden}. We clearly have $\Ds^s_\mu(\R^n)=F^{-1}(0)$. Using the Banach algebra property of $H^{s-1}(\R^n)$ one shows that $F$ takes values in $H^{s-1}(\R^n)$ and is analytic. In particular $\Ds_\mu^s(\R^n)$ is a closed subset of $\Ds^s(\R^n)$ and it remains to show that $0 \in H^{s-1}(\R^n)$ is a regular value of $F$. The differential of $F$ at $\operatorname{id} \in \Ds^s_\mu(\R^n)$ is given by 
\[
 d_{\operatorname{id}}F:H^s(\R^n;\R^n) \to H^{s-1}(\R^n),\quad f \mapsto \operatorname{div} f
\]
which is however not surjective.

\begin{Lemma}\label{lemma_not_surjective}
The map
\[
 \operatorname{div}:H^s(\R^n;\R^n) \to H^{s-1}(\R^n),\quad f \mapsto \operatorname{div} f
\]
is not surjective.
\end{Lemma}

\begin{proof}
Assume that $\operatorname{div}$ is surjective. As $H^s_\sigma(\R^n;\R^n)$ is by definition the null space of $\operatorname{div}$, the map
\[
 \Psi:H^s_\sigma(\R^n;\R^n)^\perp \to H^{s-1}(\R^n),\quad f \mapsto \operatorname{div} f
\]
is then a bijection. Here $H^s_\sigma(\R^n;\R^n)^\perp$ is the orthogonal complement of $H^s_\sigma(\R^n;\R^n)$ in $H^s(\R^n;\R^n)$ with respect to the inner product $\langle \cdot,\cdot \rangle_s$. By the open mapping theorem $\Psi$ has a continuous inverse denoted by $\Phi$
\[
 \Phi:H^{s-1}(\R^n) \to H^s_\sigma(\R^n;\R^n)^\perp.
\]
In particular it means that there is a constant $C > 0$ so that
\begin{equation}\label{continuous_inverse}
||\Phi(w)||_s \leq C ||w||_{s-1}, \qquad \forall w \in H^{s-1}(\R^n).
\end{equation}
We then get for any $w \in H^{s-1}(\R^n)$ by integration by parts
\[
 \langle  w, w \rangle_{s-2} = \langle \Psi \Phi(w), w \rangle_{s-2} = -\sum_{j=1}^n \langle \Phi_j(w),\partial_j w \rangle_{s-2}.
\]
Applying Cauchy-Schwarz we get for any $w \in H^{s-1}(\R^n)$
\begin{eqnarray}
\nonumber
 ||w||_{s-2}^2 &\leq& ||\Phi(w)||_{s-2} ||\nabla w||_{s-2} \\
\label{wrong_inequality}
&\leq& C ||w||_{s-1} ||\nabla w||_{s-2}
\end{eqnarray}
where we used \eqref{continuous_inverse}. We claim that the inequality \eqref{wrong_inequality} cannot hold. To see it take an element $w \in H^{s-1}(\R^n)$ with $||w||_{L^2}=1$ whose Fourier transform $\hat w$ is supported in the unit ball, $\operatorname{supp} \hat w \subseteq B_1(0)$. Define for $k \geq 1$ and $x \in \R^n$
\[
w_k(x) = w\left(\frac{x}{k}\right).
\]
Using that $\widehat{w_k}(\xi)=k^n \hat w(k \xi)$, one has
\begin{eqnarray*}
 ||w_k||_{s-2}^2 &=& \int_{\R^n} (1+|\xi|^2)^{s-2} k^{2n} |\hat w(k\xi)|^2 \,d\xi \\
\noalign{\noindent and by the change of variable $\eta:=k\xi$}
 &=& \int_{|\eta| \leq 1} (1+|\frac{\eta}{k}|^2)^{s-2} k^n |\hat w(\eta)|^2 \,d\eta \geq k^n ||w||^2_{L^2}=k^n.
\end{eqnarray*}
Analogously we have
\[
 ||w_k||_{s-1}^2 = \int_{|\eta| \leq 1} (1+|\frac{\eta}{k}|^2)^{s-1} k^n |\hat w(\eta)|^2 \,d\eta \leq 2^{s-1} k^n ||w||_{L^2}^2=2^{s-1}k^n.
\]
Similarly we have for $1 \leq j \leq n$
\begin{eqnarray*}
 ||\partial_j w_k||_{s-2}^2 &=& \int_{\R^n} (1+|\xi|^2)^{s-2} \xi_j^2 k^{2n} |\hat w(k \xi)|^2 \,d\xi \\
&=& \int_{|\eta|\leq 1} (1+|\frac{\eta}{k}|^2)^{s-2} \frac{\eta_j^2}{k^2} k^n |\hat w(\eta)|^2 \,d\eta\\
&\leq& 2^{s-2} k^{n-2} ||w||_{L^2}^2=2^{s-2}k^{n-2}.
\end{eqnarray*}
So 
\[
 ||w_k||_{s-1} ||\nabla w||_{s-2} \leq (2^{s-1}  k^n)^{1/2} \cdot (2^{s-2}k^{n-2})^{1/2} =2^{s-3/2} k^{n-1} \mbox{ and } ||w||_{s-2}^2 \geq k^n.
\]
Thus for $k$ large the inequality \eqref{wrong_inequality} cannot hold. This shows that the assumption that $\operatorname{div}$ is surjetive is wrong. 
\end{proof}

Lemma \ref{lemma_not_surjective} shows that $0$ is not a regular value of $F$. To prove Theorem \ref{th_submanifold} we therefore have to argue differently then Ebin and Marsden in \cite{ebin_marsden}. The key idea is to use the exponential map as a parametrization of $\Ds^s_\mu(\R^n)$. We want to show that near $\operatorname{id} \in \Ds^s_\mu(\R^n)$ there exists a neighborhood $V$ of $0$ in $H^s_\sigma(\R^n;\R^n)$ which $\exp$ maps bijectively onto a neighborhood of $\operatorname{id}$ in $\Ds^s_\mu(\R^n)$. In the following we denote by $U^s_{\exp} \subseteq H^s(\R^n;\R^n)$ the domain of the exponential map. 

\begin{Prop}\label{prop_local_manifold}
There is a neighborhood $\tilde U \subseteq U^s_{\exp}$ of $0$ such that
\[
 \exp\big(\tilde U \cap H^s_\sigma(\R^n;\R^n)\big) = \exp(\tilde U) \cap \Ds^s_\mu(\R^n).
\]
Moreover $\left. \exp \right|_{\tilde U}$ is an analytic diffeomorphism onto its image.
\end{Prop}

First we have to make some preparations for the proof of Proposition \ref{prop_local_manifold}. In the following we denote as usual by $u(t;u_0)$ the solution of \eqref{RE} at time $t$ with initial value $u_0$. By the proof of Theorem \ref{th_main}, $\; u(t;u_0)$ is well-defined on $[0,1] \times U^s_{\exp}$. We state without proof the following quite well-known lemma (see e.g. \cite{majda})

\begin{Lemma}\label{det_derivative}
Let $u:[0,1] \times \R^n \to \R^n$ be a $C^1$-vector field admitting a flow $\varphi:[0,1] \times \R^n \to \R^n$, i.e. a $C^1$-map satisfying
\[
 \partial_t \varphi(t,x)=u(t,\varphi(t,x)) \quad \mbox{and} \quad \varphi(0,x)=x
\]
for any $(t,x) \in [0,1] \times \R^n$. Then for all $(t,x) \in [0,1] \times \R^n$
\[
 \partial_t \det\big(d_x \varphi(t,x)\big) = (\operatorname{div} u)\big(t,\varphi(t,x)\big) \cdot \det\big(d_x\varphi(t,x)\big)
\]
or, in integrated form,
\begin{equation}\label{det_formula}
\det\big(d_x\varphi(t,x)\big) = e^{\int_0^t (\operatorname{div} u)(\tau,\varphi(\tau,x)) \,d\tau}.
\end{equation}
\end{Lemma}

As a consequence of Lemma \ref{det_derivative} and the proof of Theorem \ref{th_main} we get

\begin{Coro}\label{exp_restricted}
The exponential map $\exp$ maps the divergence free vector fields into the volume-preserving diffeomorphisms, i.e. 
\[
 \exp\big(U^s_{\exp} \cap H^s_\sigma(\R^n;\R^n)\big) \subseteq \Ds^s_\mu(\R^n).
\]
\end{Coro}

\begin{Lemma}\label{lemma_small_u}
For any $\varepsilon > 0$ there is a neighborhood $\tilde U \subseteq U^s_{\exp}$ of $0$ such that
\[
 ||u(t;u_0)||_s < \varepsilon
\]
for all $t \in [0,1]$ and for all $u_0 \in \tilde U$.
\end{Lemma}

\begin{proof}[Proof of Lemma \ref{lemma_small_u}]
Since by the proof of Theorem \ref{th_main}
\[
 [0,1] \times U^s_{\exp} \to H^s(\R^n;\R^n), \quad (t,u_0) \mapsto u(t;u_0)
\]
is continuous and $u(t;0)=0$ for all $t \in [0,1]$, the claim follows by the compactness of $[0,1]$.
\end{proof}

\begin{Lemma}\label{lemma_linear_u}
There is a neighborhood $\tilde U \subseteq U^s_{\exp}$ of $0$ and a constant $C>0$ such that we have for any $0 \leq t \leq 1$ and any $u_0 \in \tilde U$
\[
 ||\operatorname{div} u(t;u_0)||_{s-1} \leq C ||\operatorname{div} u_0||_{s-1}.
\]
\end{Lemma}

\begin{proof}[Proof of Lemma \ref{lemma_linear_u}]
Choose $\tilde U$ to be a small ball around $0$ contained in $U^s_{\exp}$ so that on the one hand by Lemma \ref{lemma_small_u}
\begin{equation}\label{assumption1}
 ||u(t;u_0)||_s \leq 1, \quad \forall 0 \leq t \leq 1,\; \forall u_0 \in \tilde U
\end{equation}
and on the other hand, by Lemma \ref{lemma_linear_growth}, for some $C_1 >0$, for any $\varphi \in \exp(\tilde U)$
\begin{equation}\label{assumption2}
 ||R_\varphi f||_{s-1} \leq C_1 ||f||_{s-1} \mbox{ and } ||R_\varphi^{-1} f||_{s-1} \leq C_1 ||f||_{s-1},\; \forall f \in H^{s-1}(\R^n).
\end{equation}
Denote by $\varphi(\cdot;u_0)$ the flow corresponding to $u(\cdot;u_0)$. By the chain rule for $s$ sufficiently large one has
\begin{equation}\label{t_derivative_div}
 \partial_t \big((\operatorname{div} u)\circ \varphi\big) = R_\varphi\left( \partial_t \operatorname{div}u + (u \cdot \nabla)\operatorname{div}u\right).
\end{equation}
Approximate $u(\cdot;u_0)$ by $(u_k)_{k \geq 1} \subseteq C^1\big([0,1];H^{s+1}(\R^n;\R^n))$ in the norm of the space
\[
C^0\big([0,1];H^s(\R^n;\R^n)\big) \cap C^1\big([0,1];H^{s-1}(\R^n;\R^n)\big).
\]
Then for any $k \geq 1$, one has in $C^0\big([0,1];H^{s-1}(\R^n;\R^n)\big)$
\[
 \partial_t \big((\operatorname{div} u_k)\circ \varphi\big) = R_\varphi\left( \partial_t \operatorname{div}u_k + (u \cdot \nabla)\operatorname{div}u_k\right)
\]
In particular the identity holds in $C^0\big([0,1];H^{s-2}(\R^n;\R^n)\big)$. Letting $k \to \infty$ on both sides of the latter identity leads to
\[
 \partial_t \big((\operatorname{div} u)\circ \varphi\big) = R_\varphi\left( \partial_t \operatorname{div}u + (u \cdot \nabla)\operatorname{div}u\right).
\]
Substituting formula \eqref{u_derivative} for $\partial_t \operatorname{div} u$ one gets
\begin{equation}\label{t_derivative_explicit}
 \partial_t \big((\operatorname{div} u) \circ \varphi\big) = R_\varphi \left( \chi(D) \big( 2(u \cdot \nabla) \operatorname{div} u + (\operatorname{div} u)^2 \big) \right).
\end{equation}
Integrating \eqref{t_derivative_explicit} with respect to $t$ yields
\[
 \operatorname{div} u(t) = R_{\varphi(t)}^{-1} \left(\operatorname{div} u_0 + \int_0^t R_{\varphi(\tau)} \left(\chi(D)\Big(2(u(\tau) \cdot \nabla) \operatorname{div} u(\tau) + (\operatorname{div} u(\tau))^2\Big)\right) \,d\tau \right).
\]
Using \eqref{assumption2} we get
\begin{multline}
\label{divu_triangle}
 ||\operatorname{div} u(t)||_{s-1} \leq C_1 ||\operatorname{div} u_0||_{s-1} \\
 + C_1^2 \int_0^t ||\chi(D)\big(2(u(\tau) \cdot \nabla) \operatorname{div} u(\tau)\big)||_{s-1}  + ||\chi(D)(\operatorname{div} u(\tau))^2||_{s-1} \,d\tau
\end{multline}
For the first expression under the integral sign we have by \ref{chi_smoothing} for all $0 \leq \tau \leq 1$
\[
 ||\chi(D)\big(2(u(\tau) \cdot \nabla) \operatorname{div} u(\tau)\big)||_{s-1} \leq 2\sqrt{2} ||\big(u(\tau) \cdot \nabla\big) \operatorname{div} u(\tau)||_{s-2}.
\]
Multiplication properties of Sobolev functions imply that there exists a constant $C_2 > 0$ such that for all $0 \leq \tau \leq 1$
\[
 \||\big(u(\tau) \cdot \nabla\big) \operatorname{div} u(\tau)||_{s-2} \leq C_2 ||u(\tau)||_s ||\operatorname{div} u(\tau)||_{s-1}.
\]
Combined with \eqref{assumption1} we thus have proved that
\[
  ||\chi(D)\big(2(u(\tau) \cdot \nabla) \operatorname{div} u(\tau)\big)||_{s-1} \leq 2 \sqrt{2} C_2 ||\operatorname{div} u(\tau)||_{s-1}.
\]
For the second expression in the integrand in \eqref{divu_triangle} the Banach algebra property of $H^{s-1}(\R^n)$ says that there exists an absolute constant $C_3 > 0$ so that
\[
 ||\chi(D) \big(\operatorname{div} u(\tau)\big)^2||_{s-1} \leq ||\big(\operatorname{div}u(\tau)\big)^2||_{s-1}\leq C_3 ||\operatorname{div} u(\tau)||_{s-1}^2,\; \forall 0 \leq \tau \leq 1,
\]
or using that $||\operatorname{div} u(\tau)||_{s-1} \leq ||u(\tau)||_s \leq 1$ one concludes that
\[
 ||\chi(D) \big(\operatorname{div} u(\tau)\big)^2||_{s-1} \leq C_3 ||\operatorname{div} u(\tau)||_{s-1},\; \forall 0 \leq \tau \leq 1.
\]
Substituting the obtained inequalities into \eqref{divu_triangle} there is an absolute constant $C_4 > 0$ such that 
\[
 ||\operatorname{div} u(t) ||_{s-1} \leq C_1 ||\operatorname{div} u_0||_{s-1} + C_4 \int_0^t ||\operatorname{div} u(\tau)||_{s-1} \,d\tau,\; \forall 0 \leq t \leq 1.
\]
By Gronwall's inequality we then have for any $0 \leq t \leq 1$
\[
 ||\operatorname{div} u(t)||_{s-1} \leq C_1 ||\operatorname{div} u_0||_{s-1} (1+e^{C_4 t}).
\] 
By choosing $\tilde U$ as described above and $C=C_1 (1+e^{C_4})$ we get the claim.
\end{proof}

\noindent
Now we can prove Proposition \ref{prop_local_manifold}.

\begin{proof}[Proof of Proposition \ref{prop_local_manifold}]
By Corollary \ref{exp_restricted}, 
\[
 \exp\big(U^s_{\exp} \cap H^s_\sigma(\R^n;\R^n)\big) \subseteq \Ds^s_\mu(\R^n).
\]
The fact that the differential $d_0 \exp$ is the identity map, together with the inverse function theorem implies that there exists a neighborhood $U' \subseteq U^s_{\exp}$ of $0$ so that 
\[
 \exp:U' \to \Ds^s(\R^n)
\]
is a diffeomorphism onto its image. In particular
\[
 \exp:U' \cap H^s_\sigma(\R^n;\R^n) \to \Ds^s_\mu(\R^n)
\]
is $1-1$. It remains to show that there exists a neighborhood $\tilde U \subseteq U'$ of $0$ so that $\exp(\tilde U) \cap \Ds^s_\mu(\R^n)$ is contained in $\exp\big(\tilde U \cap H^s_\sigma(\R^n;\R^n)\big)$. Arguing by contraposition we show that there exists a neighborhood $\tilde U$ so that any $u_0 \in \tilde U$ with $\exp(u_0) \not\in \Ds^s_\mu(\R^n)$ is an element in $H^s(\R^n;\R^n)\setminus H^s_\sigma(\R^n;\R^n)$. By the formula \eqref{det_formula}, the condition $\exp(u_0) \not\in \Ds^s_\mu(\R^n)$, $u_0 \in U^s_{\exp}$, means for the corresponding solution $u(t)\equiv u(t;u_0)$ and the corresponding flow $\varphi(t)\equiv \varphi(t;u_0)$
\begin{equation}\label{verify_condition}
 \int_0^1 (\operatorname{div} u) (t,\varphi(t,x)) \,dt \neq 0 \mbox{ for some } x \in \R^n.
\end{equation}
In a first step we want to express $\int_0^1 (\operatorname{div} u(t)) \circ \varphi(t)\,dt$ in a convenient way. Integrating \eqref{t_derivative_explicit} gives
\[
 (\operatorname{div} u(t)) \circ \varphi(t) = \operatorname{div} u_0 + \int_0^t R_{\varphi(\tau)} \left(\chi(D)\big(2 (u(\tau) \cdot \nabla) \operatorname{div} u(\tau) + (\operatorname{div} u(\tau))^2 \big) \right) \,d\tau.
\] 
Integrating again we arrive at
\begin{multline}
 \label{integral_div}
 \int_0^1 (\operatorname{div} u(t)) \circ \varphi(t) \,dt = \operatorname{div} u_0 \\
 + \int_0^1 \int_0^t R_{\varphi(\tau)} \left(\chi(D)\big(2 (u(\tau) \cdot \nabla) \operatorname{div} u(\tau) + (\operatorname{div} u(\tau))^2 \big) \right) \,d\tau dt.
\end{multline}
The aim is to bound the $H^{s-1}$-norm of the left-hand side of the latter identity away from $0$. By Lemma \ref{lemma_linear_growth} there exists a ball $\tilde U \subseteq U'$, with $U'$ as above, centered at $0$ and $C_1 > 0$ such that for any $f \in H^{s-1}(\R^n;\R^n)$
\begin{equation}\label{composition_linear}
 ||R_\psi f||_{s-1} \leq C_1 ||f||_{s-1},\quad \forall\psi \in \exp(\tilde U).
\end{equation}
Thus we get for any $u_0 \in \tilde U$
\begin{multline*}
\big|\big| \int_0^1 \int_0^t R_{\varphi(\tau)} \left(\chi(D)\big(2 (u(\tau) \cdot \nabla)\operatorname{div} u(\tau) + (\operatorname{div} u(\tau))^2 \big) \right) \,d\tau dt \big|\big|_{s-1} \\ 
\leq \int_0^1 \int_0^t ||R_{\varphi(\tau)} \left(\chi(D)\big(2 (u(\tau) \cdot \nabla)\operatorname{div} u(\tau) + (\operatorname{div} u(\tau))^2 \big) \right)||_{s-1} \,d\tau dt\\
\leq C_1 \int_0^1 \int_0^t ||\chi(D)\big(2 (u(\tau) \cdot \nabla)\operatorname{div} u(\tau) + (\operatorname{div} u(\tau))^2 \big)||_{s-1} \,d\tau dt
\end{multline*}
where in the last inequality we used \eqref{composition_linear}. By \eqref{chi_smoothing} there is an absolute constant $C_2>0$ such that for any $0 \leq \tau \leq 1$
\begin{multline*}
 ||\chi(D)\big(2 (u(\tau) \cdot \nabla)\operatorname{div} u(\tau) + (\operatorname{div} u(\tau))^2 \big)||_{s-1} \\
 \leq C_2 \big(||(u(\tau) \cdot \nabla) \operatorname{div} u(\tau)||_{s-2} + ||(\operatorname{div} u(\tau))^2||_{s-2} \big).
\end{multline*} 
By multiplication properties of Sobolev functions there exists $C_3 >0$ such that for any $0 \leq \tau \leq 1$
\[
 || (\operatorname{div} u(\tau))^2||_{s-2} \leq C_3 ||u(\tau)||_s ||\operatorname{div} u(\tau)||_{s-1}
\]
and
\[
 ||2 (u(\tau) \cdot \nabla)\operatorname{div} u(\tau)||_{s-2} \leq C_3 ||u(\tau)||_s ||\operatorname{div} u(\tau)||_{s-1}.
\]
From the last two inequalities we conclude that there is an absolute constant $C_4 > 0$ such that for any $0 \leq \tau \leq 1$ and for any $u_0 \in \tilde U$
\[
 ||\chi(D)\big(2 (u(\tau) \cdot \nabla)\operatorname{div} u(\tau) + (\operatorname{div} u(\tau))^2 \big)||_{s-1} \leq C_4 ||u(\tau)||_s ||\operatorname{div} u(\tau)||_{s-1}.
\]
By Lemma \ref{lemma_small_u} -- Lemma \ref{lemma_linear_u} and after shrinking $\tilde U$, if necessary, we get for any $0 \leq \tau \leq 1$ and for any $u_0 \in \tilde U$
\[
 ||\chi(D)\big(2 (u(\tau) \cdot \nabla)\operatorname{div} u(\tau) + (\operatorname{div} u(\tau))^2 \big)||_{s-1} \leq \frac{1}{2} ||\operatorname{div} u_0||_{s-1}.
\]
Thus we get from \eqref{integral_div} for any $u_0 \in \tilde U$
\begin{equation}\label{div_ineq}
 ||\int_0^1 (\operatorname{div} u(t)) \circ \varphi(t) \,dt||_{s-1} \geq \frac{1}{2} ||\operatorname{div} u_0||_{s-1}.
\end{equation}
In particular we see from \eqref{div_ineq}, that for any $u_0 \in \tilde U$ with $\operatorname{div} u_0 \neq 0$ the statement \eqref{verify_condition} holds.
\end{proof}

\noindent
Now we can prove Theorem \ref{th_submanifold}

\begin{proof}[Proof of Theorem \ref{th_submanifold}]
It is to show that the property described in Definition \ref{def_submanifold} holds for $\Ds^s_\mu(\R^n)$. Let $\tilde U \subseteq H^s(\R^n;\R^n)$ be as in the statement of Proposition \ref{prop_local_manifold}. Then
\[
 \exp\big(\tilde U \cap H^s_\sigma(\R^n;\R^n)\big) = \exp(\tilde U) \cap \Ds^s_\mu(\R^n)
\]
and hence $\exp(\tilde U) \cap \Ds^s_\mu(\R^n)$ is a submanifold of $\exp(\tilde U)$ with
\[
 \left. \exp \right|_{\tilde U \cap H^s_\sigma(\R^n;\R^n)}
\]
being a parametrization. To show the analog conclusion for an arbitrary $\psi \in \Ds^s_\mu(\R^n)$ instead of $\operatorname{id} \in \Ds^s_\mu(\R^n)$ we use the group structure of $\Ds^s(\R^n)$ and $\Ds^s_\mu(\R^n)$. We claim that for any $\psi \in \Ds^s(\R^n)$
\[
 R_\psi:\Ds^s(\R^n) \to \Ds^s(\R^n), \quad \varphi \mapsto \varphi \circ \psi
\]
is real analytic. Indeed using the identification of $\Ds^s(\R^n)$ with $\Ds^s(\R^n) - \operatorname{id} \subseteq H^s(\R^n;\R^n)$, one has with $g=\psi - \operatorname{id}$,
\[
 R_\psi:f \mapsto g + f \circ \psi
\]
which is affine and hence real analytic. The map $R_\psi$ is invertible with inverse $R_\psi^{-1}$. Now as $\Ds^s_\mu(\R^n) \subseteq \Ds^s(\R^n)$ is a subgroup one has for any $\psi \in \Ds^s(\R^n)$
\[
 R_\psi\big(\exp(\tilde U \cap H^s_\sigma(\R^n;\R^n))\big) = R_\psi\big(\exp(\tilde U)\big) \cap \Ds^s_\mu(\R^n).
\]
Note that $R_\psi\big(\exp(\tilde U)\big)$ is a neighborhood of $\psi$ in $\Ds^s(\R^n)$. Hence 
\[
 \left. R_\psi \circ \exp \right|_{\tilde U \cap H^s_\sigma(\R^n;\R^n)}
\]
is a real analytic parametrization of $R_\psi\big(\exp(\tilde U)\big) \cap \Ds^s_\mu(\R^n)$. As $\psi \in \Ds^s_\mu(\R^n)$ is arbitrary we get by Definition \ref{def_submanifold} that $\Ds^s_\mu(\R^n)$ is a real analytic submanifold of $\Ds^s(\R^n)$. 
\end{proof}

By Theorem \ref{th_submanifold} we get a differential structure for $\Ds^s_\mu(\R^n)$. An immediate corollary is the following one.

\begin{Coro}\label{coro_exp_restricted}
The exponential map restricts to a real analytic map
\[
 \exp:U^s_{\exp} \cap H^s_\sigma(\R^n;\R^n) \to \Ds^s_\mu(\R^n).
\]
Moreover it is a diffeomorphism around $0$.
\end{Coro}

\begin{Rem}
The tangent space of $\Ds^s_\mu(\R^n)$ at $\operatorname{id} \in \Ds^s_\mu(\R^n)$, as a subspace of $T_{\operatorname{id}}\Ds^s(\R^n)\equiv H^s(\R^n;\R^n)$, is given by
\[
T_{\operatorname{id}}\Ds^s_\mu(\R^n) = H^s_\sigma(\R^n;\R^n).
\]
Indeed the tangent space at $\operatorname{id} \in \Ds^s_\mu(\R^n)$ is by Corollary \ref{coro_exp_restricted} spanned by the vectors
\[
 \left. \partial_\varepsilon \right|_{\varepsilon=0} \exp(\varepsilon v)=v
\]
for $v \in H^s_\sigma(\R^n;\R^n)$. The tangent space at an arbitrary $\psi \in \Ds^s_\mu(\R^n)$ is the right translate of $H^s_\sigma(\R^n;\R^n)$ by $\psi$, i.e. $\tilde v$ is in $T_\psi \Ds^s_\mu(\R^n)$ iff it is of the form
\[
 \tilde v = v \circ \psi
\]
for some $v \in H^s_\sigma(\R^n;\R^n)$.
\end{Rem}
\clearpage
\appendix

\section{Analyticity in real Banach spaces}\label{appendix_analyticity}

The references for this section are \cite{mujica, analyticity}. For differential calculus in Banach spaces see e.g. \cite{dieudonne}. In the following $X$, $Y$, $Z$ will denote {\em real} Banach spaces with the corresponding norms $||\cdot||_X$, $||\cdot||_Y$, $||\cdot||_Z$. We denote by $L^k(X;Y)$ the space of continuous $k$-linear forms on $X \times \ldots \times X$ ($k$-times) with values in $Y$. For any symmetric $\tilde Q \in L^k(X;Y)$ denote by $Q$ the restriction of $\tilde Q$ onto the diagonal. $Q$ is referred to as the homogeneous polynomial associated to $\tilde Q$. For a sequence of symmetric $k$-linear forms $(\tilde Q_k)_{k \geq 0}$, $\tilde Q_k \in L^k(X;Y)$, with the corresponding homogeneouos polynomials $(Q_k)_{k \geq 0}$ consider the power series around $x_0 \in X$
\begin{equation}\label{formal_power}
f(x) = \sum_{k \geq 0} Q_k (x-x_0):=\sum_{k \geq 0} \tilde Q_k(x-x_0,\ldots,x-x_0).
\end{equation}
Following \cite{mujica, analyticity} we call the convergence radius of the power series
\[
 \sum_{k \geq 0} ||Q_k|| t^k, \quad t \in \R
\]
the radius (of convergence) of the series given in \eqref{formal_power}, where we denote by $||Q_k||$ the norm of the homogeneous polynomial $Q_k$, i.e.
\begin{equation}\label{radius_condition}
 ||Q_k|| := \sup_{||x||_X \leq 1} ||Q_k (x)||_Y.
\end{equation}
Thus by the Cauchy-Hadamard formula (see e.g \cite{mujica}) the radius $R$ of the series \eqref{formal_power} is given by
\begin{equation}\label{hadamard_formula}
 1/R = \limsup_{k \to \infty} ||Q_k||^{1/k}.
\end{equation}
We will use this in the following form: If the power series \eqref{formal_power} has radius $R > 0$ we then have
\begin{equation}\label{alternative_description}
\sup_{k \geq 0} ||Q_k|| r^k < \infty
\end{equation}
for any $0 \leq r < R$. On the other hand, if \eqref{alternative_description} holds for any $0 \leq r < R$ then the power series has (at least) radius $R$.\\
Now to the notion of real analyticity.

\begin{Def}\label{def_analytic}
We say that $f:U \subseteq X \to Y$ is real analytic in the open set $U$ if for all $x_0 \in U$ the map $f$ can be represented in a ball around $x_0$ of radius $r > 0$ as a power series of the form \eqref{formal_power} with radius $R \geq r$, i.e. we have
\[
 f(x) = \sum_{k \geq 0} Q_k (x-x_0), \quad ||x-x_0||_X < r.
\]
\end{Def}

As is shown in \cite{analyticity} a power series of the form \eqref{formal_power} with radius $R > 0$ defines a real analytic map in the ball $||x-x_0||_X < R$. There it is also shown that a real analytic map is $C^\infty$ and that composition of real analytic maps is again real analytic. These properties allow the notion of submanifold and the corresponding notion of real analytic maps in the category of real analytic objects. We will use the following form of the definition of a submanifold.

\begin{Def}\label{def_submanifold}
Let $X$ be a real Hilbert space and $U \subseteq X$ a non-empty open subset. We say that $M \subseteq U$,$M \neq \emptyset$, is a real analytic submanifold of $U$ if there is some closed subspace $V \subseteq X$ such that for all $m \in M$ there is some neighborhood $W \subseteq X$ of $m$, an open neighborhood $G$ of $0$ and a real analytic diffeomorphism $\Phi$ ($\Phi^{-1}$ is also real analytic) 
\[
 \Phi:G \to W
\]
such that we have
\[
 \Phi(G \cap V) = W \cap M.
\]
One calls $\left. \Phi \right|_{G \cap V}$ a parametrization of $W \cap M$.
\end{Def}

An existence and uniqueness theorem for analytic ODE's can be found in \cite{dieudonne}. Actually in \cite{dieudonne} they just discuss the situation for complex Banach spaces. But by complexification one immediately gets the analog result for real Banach spaces which reads as

\begin{Prop}\label{prop_analytic_flow}
Let $V:O \subseteq X \to X$ be a real analytic map (vector field) on the open set $O$. For every $w \in O$ there is a $T >0$ and $\delta > 0$ with $B_\delta(w) \subseteq O$ such that for any $u_0 \in B_\delta(w)$ the initial value problem
\begin{equation}\label{analytic_IVP}
 \dot \gamma(t) = V\big(\gamma(t)\big); \quad \gamma(0)=u_0
\end{equation}
has a unique solution $\gamma(t)=\Psi(t,u_0)$. Moreover the flow
\[
 \Psi : (-T,T) \times B_\delta(u_0) \to O
\]
is real analytic.
\end{Prop}

The following criterion (see also \cite{alex2} for a more general result) is used in Lemma \ref{lemma_composition1} to prove that a given map is real analytic.

\begin{Prop}\label{prop_weak_analytic}
Let $\big(X,\langle \cdot,\cdot \rangle_X\big)$ and $\big(Y,\langle \cdot, \cdot \rangle_Y\big)$ be real Hilbert spaces. Let $\phi:U \subseteq X \to Y$ be a map on the open subset $U \subseteq X$. Assume that the following property holds for some $x_0 \in U$ and $R>0$: For any $y \in Y$ we have that the map 
\[
 \langle \phi(\cdot),y \rangle_Y : U \to \R
\]
admits a power series representation with radius $R$ around $x_0$. Then $\phi:U \to Y$ admits a power series representation of radius $R$ around $x_0$.
\end{Prop}

\begin{Rem}\label{rem_weakly_analytic}
This is somehow the version of ''weakly holomorphic implies holomorphic'' suitable for real Hilbert spaces. For complex Hilbert spaces the situation is much easier (see e.g. \cite{mujica}).
\end{Rem}

To prove this proposition we need the following lemma (see also \cite{alex} for a more general formulation). It is just Proposition \ref{prop_weak_analytic} for the case $X=\R$.

\begin{Lemma}\label{lemma_weak_curve}
Let $Y$ be as in Proposition \ref{prop_weak_analytic} and $\gamma:(-R,R) \to Y$ a curve such that for every $y \in Y$ the map
\[
 \langle \gamma(\cdot),y \rangle_Y :(-R,R) \to \R
\]
has a convergent power series with radius $R$ around $0$. Then $\gamma:(-R,R) \to Y$ admits a power series representation
\[
 \gamma(t) = \sum_{k \geq 0} \frac{1}{k!} a_k t^k, \quad(a_k)_{k \geq 0} \subseteq Y
\]
with radius $R$.
\end{Lemma}

\begin{proof}
For a function $f:(-R,R) \to \R$ we define for $k \geq 0$ the finite differences recursively by
\[
 \Delta_h f(t) = f(t+h)-f(t), \ldots, \Delta_h^{k+1} f(t) = \Delta_h^k f(t+h) - \Delta_h^k f(t).
\]
For a fixed $k \geq 0$ these expressions make sense for $t \in (-R,R)$ and $h$ small enough. These finite differences are defined in the same way for $Y$-valued $f$. Furthermore we have for smooth $f$
\begin{equation}\label{derivatives}
  \partial_t^k f(t) = \lim_{h \to 0} \frac{\Delta_h^k f(t)}{h^k}.
\end{equation}
By assumption we have for every  $y \in Y$
\[
 f^{(y)}(t):= \langle \phi(t),y \rangle = \sum_{k \geq 0} \frac{1}{k!} a_k^{(y)} t^k, \quad (a_k^{(y)})_{k \geq 0} \subseteq \R
\]
a power series expansion with radius $R$. By linearity we have
\[
 \frac{\Delta_h^k f^{(y)}(0)}{h^k} = \langle \frac{\Delta_h^k \phi(0)}{h^k},y \rangle_Y.
\]
From \eqref{derivatives} we get 
\[
 \frac{\Delta_h^k f^{(y)}(0)}{h^k} \to a_k^{(y)}
\]
as $h \to 0$. As this holds for every $y \in Y$, we get for some $a_k \in Y$
\[
 \frac{\Delta_h^k \phi(0)}{h^k} \rightharpoonup a_k
\]
i.e., it converges weakly to $a_k$ in $Y$. This $a_k$ has the property
\[
 a_k^{(y)} = \langle a_k,y \rangle_Y
\]
for all $y \in Y$. As $f^{(y)}(t)$ has convergence radius $R$ we have
\[
 \sup_{k \geq 0} \frac{1}{k!} |a_k^{(y)}| r^k < \infty \quad \mbox{ or equivalently } \quad \sup_{k \geq 0} \frac{1}{k!} |\langle a_k,y \rangle_Y| r^k < \infty 
\]
for all $y \in Y$ and $r < R$. By the uniform boundedness principle we then have
\[
 \sup_{k \geq 0} \frac{1}{k!} ||a_k||_Y r^k < \infty
\]
for all $r < R$. This means that $\tilde \phi:(-R,R) \to Y$ defined by
\[
 \tilde \phi(t) := \sum_{k \geq 0} \frac{1}{k!} a_k t^k
\]
is a power series with radius $R$. We have for any $y \in Y$ and $t \in (-R,R)$
\[
 \langle \tilde \phi(t),y \rangle_Y = \sum_{k \geq 0} \frac{1}{k!} \langle a_k, y \rangle_Y t^k = \langle \phi(t), y \rangle_Y
\]
which means $\tilde \phi(t) = \phi(t)$. This shows the lemma.
\end{proof}

\noindent
With the help of this lemma we can prove the proposition.

\begin{proof}[Proof of Proposition \ref{prop_weak_analytic}]
Without loss of generality we assume $x_0=0$. Let $v \in X \setminus \{0\}$. Consider the curve $t \mapsto \phi(t v)$. By assumption $\langle \phi(tv),y \rangle_Y$ has a convergent power series around $0$ with radius $R/||v||_X$. As $R$ does not depend on $y$ we can apply Lemma \ref{lemma_weak_curve} to $t \mapsto \phi(tv)$ and we get that it is a smooth curve. In particular
\[
 Q_k(v):= \left. \partial_t^k \right|_{t=0} \phi(tv)
\]
is well-defined. On the other hand we have by assumption, for any fixed $y \in Y$,
\[
 \langle \phi(v),y \rangle_Y = \sum_{k \geq 0} \frac{1}{k!} Q_k^{(y)}(v)
\]
for some $\R$-valued homogeneuos polynomial $Q_k^{(y)}$ of order $k$, $k \geq 0$. As we have
\[
 \left. \partial_t^k \right|_{t=0} \langle \phi(tv),y\rangle_Y = Q_k^{(y)}(v)
\]
we get for all $y \in Y$
\[
 \langle Q_k(v),y \rangle_Y = Q_k^{(y)}(v).
\]
By \cite{mujica} we know that a weakly continuouos polynomial is a continuous polynomial, i.e. $Q_k(v)$ is a homogeneuous $Y$-valued polynomial in $X$ of order $k$. As the power series with $Q_k^{(y)}$ has radius $R$, we have
\[
 \sup_{k \geq 0} \frac{1}{k!} ||Q_k^{(y)}|| r^k < \infty
\]
for all $y \in Y$ and $r < R$ where $||Q_k^{(y)}||$ is the norm of the $\R$-valued homogeneous polynomial $Q_k^{(y)}$. Again by the uniform boundedness principle we conclude that
\[
 \sup_{k \geq 0} \frac{1}{k!} ||Q_k|| r^k < \infty
\]
for all $r < R$, where here $||Q_k||$ is the norm of the $Y$-valued homogeneous polynomial $Q_k$. Therefore $\tilde \phi:B_R(0) \to Y$ defined by 
\[
 \tilde \phi(v) := \sum_{k \geq 0} \frac{1}{k!} Q_k(v)
\]
is a power series with radius $R$. Now we have for all $y \in Y$ and for all $v \in B_R(0)$
\[
 \langle \tilde \phi(v),y \rangle_Y = \sum_{k \geq 0} \frac{1}{k!} \langle Q_k(v),y \rangle_Y = \sum_{k \geq 0} \frac{1}{k!} Q_k^{(y)}(v) = \langle \phi(v),y \rangle_Y.
\]
Therefore $\tilde \phi(v) = \phi(v)$. Hence the claim.
\end{proof}

\noindent
Sometimes we have to deal with maps which are linear in one entry, i.e. maps of the form
\[
 \phi:(Y \times O) \subseteq Y \times X \to Z
\]
where $\phi(\cdot,x)$, $x \in O$, is linear in the first entry, i.e. $\phi(\cdot,x) \in L(Y;Z)$. For such maps we have the following lemma

\begin{Lemma}\label{lemma_linear}
Assume that
\[
 \phi : Y \times O \to Z
\]
is real analytic and linear in the first entry and has a power series expansion around $(0,x_0) \in Y \times X$ with radius $R$ where $O \subseteq X$ is open and $x_0 \in X$. Then
\begin{eqnarray*}
 \tilde \phi : O &\to& L(Y;Z) \\
 x &\mapsto& \big( y \mapsto \phi(y,x)\big)
\end{eqnarray*}
has a power series expansion with radius $R$ around $x_0$.
\end{Lemma}

\begin{proof}
Without loss of generality we assume $x_0=0$. We have for any fixed $y \in Y$ with $||y||<R$, by Taylor's theorem the following expansion around $x_0=0$
\begin{equation}\label{desired_expansion}
 \phi(y,x) = \sum_{k \geq 0} \frac{1}{k!} d_{2,(y,0)}^k \phi \, x^k
\end{equation}
where $d_2$ denotes the partial derivative in the second entry, i.e. 
\begin{equation}\label{formula1}
 d_{2,(y,0)}^k \phi \,x^k = d^k_{(y,0)} \phi \,(0,x)^k.
\end{equation}
Here we use for a Banach space $W$ and $w \in W$ the notation $w^k$ for $(w,\ldots,w) \in W \times \cdots \times W$ ($k$-times) and $d^k_p \phi \, w^k$ stands for the $k$'th order differential at the point $p$ evaluated in $w^k$. One has 
\begin{equation}\label{formula2}
d_{2,(y,0)}^k \phi \,x^k = \left. \partial_t^k \right|_{t=0} \phi(y,tx)
\end{equation}
We see from \eqref{formula2} that
\[
 y \to d_{2,(y,0)}^k \phi(y,0) \,x^k
\]
is linear. Recall that we have the canonical isomorphism $L^{k+1}(Y \times X\times \cdots \times X;Z) \simeq L^k(X \times \cdots \times X;L(Y;Z))$. Therefore we can look at $d_{2,(\cdot,0)}^k$ as a polynomial in $X$ with values in $L(Y;Z)$. Thus \eqref{desired_expansion} will be the desired expansion. But we have to estimate the corresponding norms to ensure that it has radius $R$. Take $0 < \delta < R$. For any fixed $y \in Y$ with $||y||_Y < \delta$ we have a power series for $x \mapsto \phi(y,x)$ with radius $R-\delta$ -- see \cite{analyticity}. Thus we have
\[
 \sup_{k \geq 0} \frac{1}{k!} \left(\sup_{||x||_X \leq 1} ||d^k_{(y,0)} \phi \,(0,x)^k||_Z \right)r^k < \infty
\]
for all $r < R-\delta$. By the linearity of $d^k_{(y,0)}$ in $y$ this extends to all $y \in Y$. Hence by the uniform boundedness principle 
\[
 \sup_{k \geq 0} \frac{1}{k!} \left(\sup_{||y||_Y \leq 1} \sup_{||x||_X \leq 1} ||d^k_{(y,0)} \phi \,(0,x)^k||_Z \right) r^k < \infty
\]
for all $r < R - \delta$. Thus the expansion \eqref{desired_expansion} has radius $R-\delta$. By letting $\delta \to 0$ we get the claim. 
\end{proof}

Finally we give an example of a real analytic operation which will be needed.

\begin{Lemma}\label{analytic_det}
Let $s > n/2+1$. The map
\[
 \phi:H^{s-1}(\R^n) \times \Ds^s(\R^n) \to H^{s-1}(\R^n),\quad (f,\varphi) \mapsto \frac{f}{\det(d\varphi)}
\]
is real analytic.
\end{Lemma}

\begin{proof}
Consider the map
\[
 \Psi:\Ds^s(\R^n) \to \mathcal L\big(H^{s-1}(\R^n),H^{s-1}(\R^n)\big),\quad \varphi \mapsto \left[ f \mapsto f \cdot \det(d\varphi)\right]
\]
which is real analytic. From \cite{composition} we know that $\phi$ is welldefined (and continuous). This means that $\Psi$ maps into the invertible linear maps. Since the inversion map 
$\operatorname{inv}:T \mapsto T^{-1}$ is real analytic (cf. Neumann series), we see that
\[
 \phi(f,\varphi)=\operatorname{inv}(\Psi) (f)
\]
is real analytic.
\end{proof}

\bibliographystyle{plain}

\end{document}